\newcommand{\dd}{\mathrm{d}}
\newcommand{\Vol}{\operatorname{Vol}}
\newcommand{\vol}{\operatorname{vol}}
\newcommand{\Iso}{\operatorname{Iso}}
\newcommand{\Ric}{\operatorname{Ric}}
\newcommand{\scal}{\operatorname{scal}}
\newcommand{\diam}{\operatorname{diam}}
\newcommand{\tra}{{\mathfrak t}}
\newcommand{\rot}{{\mathfrak r}}
\newcommand{\centr}{\operatorname{Z}}
\newcommand{\spec}{\operatorname{Spec}}
\newcommand{\aff}{\operatorname{Aff}(\R^d)}
\newcommand{\norm}{\operatorname{N}}
\renewcommand{\S}{\mathds S}
\renewcommand{\H}{\mathds H}
\newcommand{\N}{\mathds N}
\newcommand{\Z}{\mathds Z}
\newcommand{\R}{\mathds R}
\newcommand{\C}{\mathds C}
\newcommand{\GL}{\mathsf{GL}}
\newcommand{\SO}{\mathsf{SO}}
\renewcommand{\O}{\mathsf O}
\newcommand{\Sp}{\mathsf{Sp}}
\newcommand{\G}{\mathsf{G}}
\newcommand{\K}{\mathsf{K}}
\newcommand{\LL}{\mathsf{L}}
\newcommand{\Ad}{\operatorname{Ad}}
\newcommand{\g}{\mathrm g}
\renewcommand{\gg}{\mathbf g}
\newcommand{\h}{\mathrm h}
\newcommand{\y}{\operatorname Y}
\newcommand{\gr}{\g_\mathrm{round}}
\newcommand{\gf}{\g_\mathrm{flat}}
\newcommand{\ghyp}{\g_\mathrm{hyp}}
\newcommand{\gpr}{\g_\mathrm{prod}}
\newtheorem{theorem}{Theorem}[]
\newtheorem{lemma}[theorem]{Lemma}
\newtheorem{proposition}[theorem]{Proposition}
\newtheorem{corollary}[theorem]{Corollary}
\newtheorem{mainthm}{\sc Theorem}
\newtheorem{maincor}[mainthm]{\sc Corollary}
\newtheorem*{smlemma}{\sc Selberg-Malcev Lemma}
\newtheorem*{bieber}{\sc Bieberbach Theorems}
\theoremstyle{definition}
\theoremstyle{remark}
\newtheorem{remark}[theorem]{Remark}
\newtheorem{example}[theorem]{Example}
\title[Infinitely many solutions to the noncompact Yamabe problem]{Infinitely many solutions to the Yamabe problem on noncompact manifolds}
\author[R. G. Bettiol]{Renato G. Bettiol}
\author[P. Piccione]{Paolo Piccione}
\subjclass[2010]{53A30, 53C21, 35J60, 58J55, 58E11, 58E15}
\address{\begin{tabular}{lll}
University of Pennsylvania & & Universidade de S\~ao Paulo \\
Department of Mathematics & & Departamento de Matem\'atica \\
209 South 33rd St  & & Rua do Mat\~ao, 1010 \\
Philadelphia, PA, 19104-6395, USA & & S\~ao Paulo, SP, 05508-090, Brazil\\
\emph{E-mail address}: {\tt rbettiol@math.upenn.edu} & & \emph{E-mail address}: {\tt piccione@ime.usp.br}\\[0.3cm]
{\it Current address for R.\ G.\ Bettiol:} && \\
Max Planck Institute for Mathematics && \\
Vivatsgasse 7\\ 53111 Bonn, Germany &&
\end{tabular}
}
\numberwithin{equation}{section}
\numberwithin{theorem}{section}
\date{\today}
\begin{document}
\begin{abstract}
We establish the existence of infinitely many complete metrics with constant scalar curvature on prescribed conformal classes on certain noncompact product manifolds. These include products of closed manifolds with constant positive scalar curvature and simply-connected symmetric spaces of noncompact or Euclidean type; in particular, $\S^m\times\R^d$, $m\geq2$, $d\geq1$, and $\S^m\times\H^d$, $2\leq d<m$. As a consequence, we obtain infinitely many periodic solutions to the singular Yamabe problem on $\S^m\setminus\S^k$, for all $0\leq k<(m-2)/2$, the maximal range where nonuniqueness is possible. We also show that all Bieberbach groups in $\Iso(\R^d)$ are periods of bifurcating branches of solutions to the Yamabe problem on~$\S^m\times\R^d$, $m\geq2$, $d\geq1$.
\end{abstract}

\maketitle
\vspace{-0.22cm}

\section{Introduction}

The Yamabe problem on a Riemannian manifold $(M,\g)$ is to find a complete metric with constant scalar curvature which is conformal to $\g$. A landmark result in Geometric Analysis is that a solution always exists if $M$ is closed, see \cite{lee-parker} for a survey. 
The situation is much more delicate in the noncompact case, as there exist complete noncompact manifolds $(M,\g)$ for which the Yamabe problem does not have any solution~\cite{jin}. There are several partial existence results in the literature, such as \cite{aviles-mcowen,brs,grosse}, however existence is not settled in full generality.
In this paper, we exploit the geometry of discrete cocompact groups to provide large classes of noncompact manifolds on which the Yamabe problem has infinitely many \emph{periodic} solutions. 

We say that a solution to the Yamabe problem on a noncompact manifold $(M,\g)$ is \emph{periodic}, or \emph{$\Gamma$-periodic}, if it is the lift of a constant scalar curvature metric on a compact quotient $M/\Gamma$. The discrete cocompact group $\Gamma$ is the \emph{period} of the solution, in the sense that it is invariant under the action of $\Gamma$. In all instances studied in this paper, the infinitely many periodic solutions on noncompact manifolds correspond to infinitely many \emph{different periods}. In other words, these infinitely many metrics of constant scalar curvature do not descend to a common compact quotient, meaning that our multiplicity results are indeed noncompact phenomena.

Our first main result regards products of closed manifolds and symmetric spaces:

\begin{mainthm}\label{thm:first}
Let $(M,\g)$ be a closed manifold with constant positive scalar curvature, and $(N,\h)$ be a simply-connected symmetric space of noncompact or Euclidean type, such that the product $(M\times N,\g\oplus\h)$ has positive scalar curvature. Then there exist infinitely many periodic solutions to the Yamabe problem on $(M\times N,\g\oplus\h)$.
\end{mainthm}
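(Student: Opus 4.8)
The plan is to produce infinitely many solutions by a bifurcation argument along a family of rescaled product metrics, where the noncompactness and the "different periods" phenomenon come from passing to compact quotients of $N$.

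**The basic strategy.**
First I would reduce the noncompact problem on $M\times N$ to a sequence of compact Yamabe problems. Since $(N,\h)$ is a simply-connected symmetric space of noncompact or Euclidean type, its isometry group contains discrete cocompact torsion-free subgroups $\Gamma_i$ (by arithmeticity/Borel in the noncompact case, and by Bieberbach in the Euclidean case), yielding compact quotients $N_i=N/\Gamma_i$. A $\Gamma_i$-periodic solution on $M\times N$ is precisely a constant scalar curvature metric on the closed manifold $M\times N_i$ conformal to $\g\oplus\h$. So it suffices to produce, for infinitely many distinct $\Gamma_i$, a solution on each $M\times N_i$ that is \emph{not} simply the obvious product solution, and to arrange that these live over genuinely different quotients. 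The natural device is to introduce a scaling parameter $t>0$ on the $N$ factor, considering $\g_t:=\g\oplus t\,\h$ (equivalently rescaling $\g$), and to study the product metrics as critical points of the Hilbert--Einstein / Yamabe functional restricted to the conformal class. Because $M$ has constant positive scalar curvature and $N$ is Einstein (being a symmetric space, each irreducible factor is Einstein) with nonpositive scalar curvature, the product is a solution for every $t$; this gives a trivial branch whose stability changes as $t$ varies.

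**Bifurcation along the scaling branch.**
The key analytic step is to detect bifurcation instants along the trivial branch $t\mapsto(M\times N_i,\g_t)$. I would compute the second variation of the Yamabe functional (the Jacobi operator) at the product metric; its spectrum is governed by the eigenvalues of the Laplacian on $M$, on $N_i$, and the dimensional constants $c_{m+d}=\tfrac{m+d-2}{4(m+d-1)}$ appearing in the conformal Laplacian. Degeneracy of the Jacobi operator occurs exactly when an eigenvalue crosses zero, i.e. when a suitable combination of a positive eigenvalue $\lambda_k$ of $M$ and the scaling factor hits a threshold determined by the scalar curvatures. As $t\to0$ (or $t\to\infty$), infinitely many such crossings occur because $M$ contributes infinitely many eigenvalues $\lambda_k\to\infty$. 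At each crossing where the Morse index jumps, a local bifurcation branch of nonproduct constant scalar curvature metrics emerges; the cleanest tool is the variational bifurcation criterion (change in Morse index forces bifurcation), avoiding the need for a transversality hypothesis. I would verify the Palais--Smale / spectral hypotheses hold on each \emph{compact} $M\times N_i$, where the standard Yamabe variational framework applies.

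**Producing infinitely many distinct periods.**
The part requiring the most care is ensuring the infinitely many solutions correspond to infinitely many \emph{different} $\Gamma_i$, so that they are genuine noncompact solutions and not artifacts of a single quotient. Rather than bifurcating on a fixed $N_i$ with $t$ varying, I would fix the geometry and instead exploit a sequence of quotients $N_i=N/\Gamma_i$ whose volumes (or systoles) tend to infinity: rescaling $N$ by $t$ and rescaling the lattice are interchangeable, so a single scaling branch on $M\times N$ reinterprets, at each bifurcation instant $t_i$, as a bifurcation on a quotient $M\times N_{i}$ by an appropriately rescaled cocompact group $\Gamma_{i}=t_i\,\Gamma$. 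Since distinct $t_i$ give non-isometric (indeed non-homothetic) quotients, the resulting periods are pairwise distinct, establishing that the solutions do not descend to a common compact quotient. The final step is to confirm that the bifurcating metrics remain complete and $\Gamma_i$-periodic after lifting back to $M\times N$, which is automatic since the bifurcation is performed within the space of $\Gamma_i$-invariant conformal factors. The main obstacle I anticipate is controlling the second variation precisely enough to guarantee \emph{infinitely many} index jumps with distinct periods simultaneously, i.e. coupling the eigenvalue asymptotics on $M$ with the spectral geometry of the tower $\{N_i\}$ so that the bifurcation instants are genuinely distinct and accumulate only at the boundary $t\to0$ or $t\to\infty$.
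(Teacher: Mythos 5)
There is a genuine gap, and it sits exactly where you flagged the most care was needed. Your plan hinges on the claim that ``rescaling $N$ by $t$ and rescaling the lattice are interchangeable,'' so that a bifurcation at $t_i$ on the branch $\g\oplus t\,\h$ can be reinterpreted as a $\Gamma_i$-periodic solution on the \emph{fixed} manifold $(M\times N,\g\oplus\h)$ with $\Gamma_i=t_i\Gamma$. This is true only in the Euclidean case, where $(\R^d,t\,\gf)$ is isometric to $(\R^d,\gf)$ and dilations normalize $\Iso(\R^d)$. For a symmetric space of noncompact type it fails on every count: $(\H^d,t\,\ghyp)$ has curvature $-1/t$ and is \emph{not} isometric to $(\H^d,\ghyp)$, so a metric conformal to $\g\oplus t\,\ghyp$ with $t\neq 1$ does not lie in the conformal class $[\g\oplus\ghyp]$ and is not a solution of the problem as stated; lattices in a semisimple Lie group cannot be ``rescaled''; and for $\dim N\geq 3$ Mostow rigidity forbids any nontrivial deformation of the locally symmetric structure on a fixed compact quotient, so all your bifurcating solutions would in any case share the single period $\Gamma$. (The paper itself points out that bifurcation techniques break down beyond the flat case precisely because of Mostow rigidity, and reserves the collapsing-family argument for $M\times\R^d$, where the whole family $\h_t$ of flat metrics on $F=\R^d/\pi$ lifts to the \emph{same} metric $\gf$ upstairs with varying Bieberbach groups $\pi_t$.)

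The paper's proof of this theorem uses no bifurcation at all. Borel's theorem supplies one compact quotient $\Sigma=N/\Gamma$ with constant scalar curvature, and the Selberg--Malcev Lemma shows $\Gamma$ is infinite and residually finite, hence admits an infinite nested chain of finite-index normal subgroups. This produces a tower of finite-sheeted coverings $\Sigma_j\to\Sigma$ with $\Vol\to\infty$. Since every closed conformal class satisfies Aubin's bound $\y(M\times\Sigma_j,[\cdot])\leq\y(\S^n,[\gr])$, while the value of the Hilbert--Einstein functional at the pulled-back product metric grows like $\Vol^{2/n}\scal$, that product metric eventually cannot be the Yamabe minimizer; the minimizer is then a genuinely new solution, and iterating up the tower gives infinitely many solutions, all conformal to the fixed metric $\g\oplus\h$ and with pairwise distinct periods $\pi_1(\Sigma_j)$. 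If you want to salvage a bifurcation-type argument, it works (and is carried out in the paper) only for the Euclidean factor; for the noncompact type you need a mechanism, like this volume-versus-Yamabe-invariant comparison along a tower of covers, that never deforms the locally symmetric metric.
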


An immediate consequence of Theorem~\ref{thm:first} is that there exist infinitely many periodic solutions to the Yamabe problem on $\S^m\times\H^d$ for all $2\leq d<m$, and on $\S^m\times\R^d$ for all $m\geq2$, $d\geq1$. We remark that there has been considerable interest in the Yamabe problem on such noncompact manifolds in recent years~\cite{akp,petean-henry,petean-ruiz}. Each of these infinitely many solutions is of the form $\phi\,\gpr$, where $\gpr$ is the corresponding product metric $\gr\oplus\ghyp$ or $\gr\oplus\gf$, and $\phi$ is a smooth positive function on $\S^m\times\H^d$ or $\S^m\times\R^d$ that does not depend on the $\S^m$ variable, by the asymptotic symmetry method of Caffarelli, Gidas and Spruck~\cite{cgs}.

It is easy to see that these infinitely many solutions on $\S^m\times\R$ translate into infinitely many solutions also on $\S^{m+1}\setminus\{\pm p\}$ and on $\R^{m+1}\setminus\{0\}$, which are conformally equivalent to $\S^m\times\R$ via the stereographic projection. A classification of these periodic solutions and their relation to solutions on the compact quotient $\S^m\times\S^1$ has been known for several years~\cite{kobayashi,schoen87}, for further details see also \cite{ccr}. These can be seen as simple instances of the so-called \emph{singular Yamabe problem}, which consists of finding solutions to the Yamabe problem on manifolds of the form $M\setminus\Lambda$, where $M$ is a closed manifold and $\Lambda\subset M$ a closed subset.

One of the most interesting consequences of Theorem~\ref{thm:first} regards a more involved instance of the singular Yamabe problem; that of the complement $\S^m\setminus\S^k$ of a round subsphere $\Lambda=\S^k$ in a round sphere $\S^m$. The special case $k=0$ is addressed above, using the stereographic projection on $\S^m\setminus\{\pm p\}$. For $k\geq1$, a direct computation shows that the (incomplete) round metric on $\S^m\setminus \S^k$ is conformally equivalent to the product $(\S^{m-k-1}\times\H^{k+1},\gpr)$, see Subsection~\ref{subsec:syp}. Thus, pulling back the infinitely many solutions we obtained in the latter yields the following:

\begin{maincor}\label{cor:syp}
There are infinitely many periodic solutions to the singular Yamabe problem on $\S^m\setminus\S^k$, for all $0\leq k<(m-2)/2$.
\end{maincor}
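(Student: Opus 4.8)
The plan is to reduce Corollary~\ref{cor:syp} to the already-established Theorem~\ref{thm:first}, so essentially no new analysis is needed --- the work is a conformal change of coordinates plus a range check. The key claim to verify is that the (incomplete) round metric on the complement $\S^m\setminus\S^k$ of a totally geodesic subsphere is conformal to the product metric $\gpr$ on $\S^{m-k-1}\times\H^{k+1}$, which is the computation deferred to Subsection~\ref{subsec:syp}. Granting this conformal equivalence, a solution to the Yamabe problem on $(\S^{m-k-1}\times\H^{k+1},\gpr)$ pulls back, under the conformal diffeomorphism, to a complete constant scalar curvature metric in the conformal class of the round metric on $\S^m\setminus\S^k$, which is exactly a solution to the singular Yamabe problem there.

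The first thing I would carry out is to identify the factors so that Theorem~\ref{thm:first} applies: set $N=\H^{k+1}$, a simply-connected symmetric space of noncompact type (for $k\geq1$), and $M=\S^{m-k-1}$, a closed manifold with its round metric, which has constant positive scalar curvature provided $m-k-1\geq2$, i.e.\ $k\leq m-3$. I would then check the hypothesis that the product $(\S^{m-k-1}\times\H^{k+1},\gpr)$ has positive scalar curvature: since $\scal$ of a product is the sum of the scalar curvatures of the factors, with the round sphere contributing $(m-k-1)(m-k-2)$ and hyperbolic space $\H^{k+1}$ contributing $-k(k+1)$ (after normalizing curvatures appropriately), positivity of the sum is precisely the inequality $k<(m-2)/2$ in the statement. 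This is the place where the stated range emerges, and I expect the arithmetic of matching the normalizations of the two factors so that the curvatures add to a positive quantity to be the only delicate bookkeeping.

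Next I would dispose of the boundary cases of the range. The endpoint $k=0$ is handled separately, as already indicated in the text, via the stereographic projection identifying $\S^m\setminus\{\pm p\}$ with $\S^m\setminus\S^0$ and the solutions on $\S^m\times\R$ coming from the $d=1$, Euclidean-type instance of Theorem~\ref{thm:first}; I would simply cite that discussion. For $k\geq1$ I would invoke the conformal equivalence together with Theorem~\ref{thm:first} to transport the infinitely many periodic solutions. Finally, I would confirm that periodicity is preserved: each solution on the product is $\Gamma$-periodic for a discrete cocompact $\Gamma$ acting on $\H^{k+1}$, and the conformal diffeomorphism carries this $\Gamma$-action to an action on $\S^m\setminus\S^k$, so the pulled-back solutions remain periodic (with infinitely many distinct periods), justifying the word ``periodic'' in the statement.

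The main obstacle, such as it is, lies entirely in establishing the conformal equivalence $(\S^m\setminus\S^k,\g_{\mathrm{round}})\cong(\S^{m-k-1}\times\H^{k+1},\gpr)$ with the correct conformal factor and in tracking curvature normalizations so that the positivity condition reproduces exactly $k<(m-2)/2$. This is a direct but somewhat fiddly computation: one writes $\S^m$ with coordinates split according to the $\S^k$-factor, uses the warped-product structure of the round metric around a totally geodesic $\S^k$, and rescales by the warping function to separate the two factors, recognizing the hyperbolic factor via the standard identification of a half-space or ball model. Everything else is a formal consequence of Theorem~\ref{thm:first} and the conformal invariance of the Yamabe problem.
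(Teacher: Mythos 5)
Your proposal is correct and follows essentially the same route as the paper: reduce to Theorem~\ref{thm:first} via the conformal equivalence $(\S^m\setminus\S^k,\gr)\cong(\S^{m-k-1}\times\H^{k+1},\gr\oplus\ghyp)$ (handling $k=0$ separately through $\S^m\setminus\{\pm p\}\cong\S^{m-1}\times\R$), with the range $k<(m-2)/2$ arising exactly from positivity of $\scal_{\gpr}=(m-k-1)(m-k-2)-k(k+1)=(m-1)(m-2k-2)$. The only cosmetic difference is that you derive the conformal equivalence from the warped-product structure around the totally geodesic $\S^k$, while the paper uses stereographic projection to $\R^m\setminus\R^k$ followed by the cylindrical-coordinate identity $\tfrac{1}{r^2}\gf=\gr\oplus\ghyp$; both yield the same map.
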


The above extends our previous result in \cite{bps-jdg}, where bifurcation techniques were used to obtain infinitely many solutions in the particular case of $\S^m\setminus\S^1$. These techniques cannot be used if $k>1$ due to the Mostow Rigidity Theorem. Furthermore, $0\leq k<(m-2)/2$ is the maximal range of dimensions for which multiplicity of periodic solutions is possible, by the asymptotic maximum principle.

Theorem~\ref{thm:first} is a particular case of the following more general multiplicity result:

\begin{mainthm}\label{thm:general}
Let $(M,\g)$ and $(\Sigma,\h)$ be closed Riemannian manifolds with constant scalar curvature, such that $\scal_\g>0$ and $\pi_1(\Sigma)$ has infinite profinite completion. Then there exists $\lambda_0>0$ such that, for any $\lambda>\lambda_0$, there
are infinitely many periodic solutions to the Yamabe problem on $\big(M\times\widetilde\Sigma,\,\g\oplus\lambda\,\widetilde\h\big)$.
\end{mainthm}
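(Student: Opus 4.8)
The plan is to produce the periodic solutions as Yamabe minimizers on the finite covers of $\Sigma$ encoded by the finite-index subgroups of $\pi_1(\Sigma)$, and then to control their periods by an energy comparison. First I would choose $\lambda_0>0$ so large that $s_0:=\scal_\g+\tfrac1\lambda\scal_\h>0$ for all $\lambda>\lambda_0$; this is possible since $\scal_\g>0$ and both scalar curvatures are constant, and it guarantees that $\g\oplus\lambda\widetilde\h$ has positive scalar curvature. Because the profinite completion of $\pi_1(\Sigma)$ is infinite, there is a sequence of finite-index subgroups $\Gamma_j\le\pi_1(\Sigma)$ with $k_j:=[\pi_1(\Sigma):\Gamma_j]\to\infty$. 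Each $\Gamma_j$ yields a closed $k_j$-fold cover $\Sigma_j:=\widetilde\Sigma/\Gamma_j$ with the pulled-back metric $\h_j$, locally isometric to $(\Sigma,\h)$, so that $M\times\Sigma_j$ is a \emph{closed} manifold. By the classical solution of the Yamabe problem on closed manifolds, the conformal class of $\g\oplus\lambda\h_j$ contains a constant-scalar-curvature minimizer $u_j$, whose energy $E_{M\times\Sigma_j}[u_j]$ equals the Yamabe constant $Y_j:=Y(M\times\Sigma_j,[\g\oplus\lambda\h_j])$. Lifting $u_j$ to $M\times\widetilde\Sigma$ produces a complete, $\Gamma_j$-periodic solution to the Yamabe problem on $(M\times\widetilde\Sigma,\,\g\oplus\lambda\widetilde\h)$, since scalar curvature is a local invariant and the lift of a metric on a closed manifold is complete.

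The core of the argument is to show that the \emph{genuine} period of $u_j$, i.e.\ its stabilizer $\Gamma_{u_j}\supseteq\Gamma_j$ in $\pi_1(\Sigma)$, has index tending to infinity; this forces the solutions to have infinitely many distinct periods and to not descend to any common compact quotient. I would combine two properties of the Yamabe functional $E[u]=\big(\tfrac{4(n-1)}{n-2}\int|\nabla u|^2+\int\scal\,u^2\big)/\|u\|_{L^{2^*}}^2$, with $n=\dim M+\dim\Sigma$ and $2^*=2n/(n-2)$. On the one hand, Aubin's bound gives $Y_j\le Y(\S^n)$ for all $j$. On the other hand, under a $p$-fold Riemannian cover the numerator of $E$ scales by $p$ and the $L^{2^*}$-denominator by $p^{(n-2)/n}$; hence, writing $\bar u_j$ for the descent of $u_j$ to $M\times(\widetilde\Sigma/\Gamma_{u_j})$ through the $p_j$-fold cover with $p_j=[\Gamma_{u_j}:\Gamma_j]$,
\[
Y_j=E_{M\times\Sigma_j}[u_j]=p_j^{2/n}\,E_{M\times(\widetilde\Sigma/\Gamma_{u_j})}[\bar u_j]\ge p_j^{2/n}\,Y\big(M\times(\widetilde\Sigma/\Gamma_{u_j})\big),
\]
so that $p_j^{2/n}\le Y(\S^n)/Y\big(M\times(\widetilde\Sigma/\Gamma_{u_j})\big)$.

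The decisive step — and the one I expect to be the \emph{main obstacle} — is a uniform positive lower bound $Y(M\times\bar\Sigma)\ge c>0$ for the Yamabe constants of \emph{all} finite covers $\bar\Sigma$ of $\Sigma$. This is exactly where positivity of the scalar curvature is used: for any test function $u$ one has $E[u]\ge\min\!\big(\tfrac{4(n-1)}{n-2},\,s_0\big)\,\|u\|_{H^1}^2/\|u\|_{L^{2^*}}^2$, so a lower bound for $Y$ follows from an \emph{upper} bound for the Sobolev constant of $M\times\bar\Sigma$. Since every such cover is locally isometric to the fixed closed manifold $M\times\Sigma$, they share common curvature bounds and a common positive lower bound on the injectivity radius; the uniform Sobolev inequality on manifolds of bounded geometry then bounds all their Sobolev constants simultaneously, yielding $c=c(\lambda)>0$. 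With this in hand, $p_j\le\big(Y(\S^n)/c\big)^{n/2}=:P$ is bounded independently of $j$, whence the genuine periods satisfy $[\pi_1(\Sigma):\Gamma_{u_j}]=k_j/p_j\ge k_j/P\to\infty$. Thus $\{\Gamma_{u_j}\}$ contains infinitely many distinct subgroups, the associated solutions are pairwise distinct, and no finite-index subgroup of $\pi_1(\Sigma)$ is a common period — which is precisely the asserted multiplicity by different periods.
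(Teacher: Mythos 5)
Your argument is correct, and it reaches the conclusion by a genuinely different route than the paper's. The shared ingredients are the same: the equivalence in Lemma~\ref{lemma:largevol} producing finite covers of arbitrarily large index, the existence of Yamabe minimizers on the closed covers $M\times\Sigma_j$ (Theorem~\ref{thm:resyamabe}), Aubin's bound $\y(M\times\Sigma_j,[\,\cdot\,])\leq\y(\S^n,[\gr])$, and the scaling $E\mapsto p^{2/n}E$ of the Hilbert--Einstein functional under a $p$-fold Riemannian covering. The difference is in how nontriviality of the solutions is extracted. The paper builds a \emph{nested} chain of covers inductively: given the solution $\gg_{j-1}$, it chooses the next cover deep enough that $\mathcal A$ of the pulled-back metric exceeds $\y(\S^n,[\gr])$, so the new minimizer cannot coincide with the pullback of the old one, and the same volume-growth inequality shows the lifted conformal factors are pairwise distinct; no lower bound on Yamabe constants is ever needed. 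You instead take an arbitrary sequence of covers with $k_j\to\infty$ and control the true period (stabilizer) of each minimizer, which forces you to confront the uniform positive lower bound $\y(M\times\bar\Sigma)\geq c>0$ over all finite covers. Your derivation of that bound is valid: the covers are locally isometric to $M\times\Sigma$, injectivity radius does not decrease under Riemannian coverings, and the uniform Sobolev inequality on manifolds of bounded geometry (together with $\scal_{\g\oplus\lambda\h}>0$) gives $c=c(\lambda)>0$. What your approach buys is quantitative: the explicit bound $p_j\leq(\y(\S^n,[\gr])/c)^{n/2}$ on how far a minimizer can descend, hence a direct proof that the genuine periods have index tending to infinity. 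What the paper's approach buys is economy: it avoids any Sobolev-constant or bounded-geometry input and is self-contained given Theorem~\ref{thm:resyamabe}. One small overstatement on your side: distinct stabilizers only guarantee that infinitely many of the $\tilde u_j$ are distinct, not that \emph{all} pairs are; this does not affect the conclusion.
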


In the above, $(\widetilde\Sigma,\widetilde\h)$ denotes the Riemannian universal covering of $(\Sigma,\h)$, and $\lambda_0$ is the smallest nonnegative real number such that $\lambda_0\geq-\frac{\scal_\h}{\scal_\g}$. The profinite completion of a group is infinite if and only if there exists an infinite nested sequence of normal subgroups of finite index, see Subsection~\ref{subsec:groups}. For instance, infinite residually finite groups have infinite profinite completion.
In the proof of Theorem~\ref{thm:general}, this infinite chain of subgroups of $\pi_1(\Sigma)$ is used to produce an infinite chain of finite-sheeted coverings of $M\times\Sigma$ with arbitrarily large volume. Since the pull-back of $\g\oplus\lambda\,\h$ is not a Yamabe metric if one goes sufficiently high up along this chain, there must be another solution at some level. Iterating this argument gives the infinitely many solutions.

The first key input to prove Theorem~\ref{thm:first} using Theorem~\ref{thm:general} is a classical result of Borel~\cite{borel}, which states that every symmetric space $N$ of noncompact type admits irreducible compact quotients $\Sigma=N/\Gamma$. The same is obviously true for symmetric spaces of Euclidean type, that is, the Euclidean space $\R^d$. Second, if $(\Sigma,\h)$ is locally symmetric, then it clearly has constant scalar curvature, and $\pi_1(\Sigma)$ is infinite and residually finite (see Example~\ref{ex:infiniteresfinite}). Thus, $\pi_1(\Sigma)$ has infinite profinite completion. We may hence apply Theorem~\ref{thm:general}, which implies the desired statement in Theorem~\ref{thm:first}.
We stress that there are many manifolds $(\Sigma,\h)$ which are not locally symmetric but still satisfy the hypotheses of Theorem~\ref{thm:general}, see Subsection~\ref{subsec:examples}.

Despite providing infinitely many solutions to the Yamabe problem, none of the above results carries any information on the (local) arrangement of these solutions or the structure of their moduli space. However, this can be achieved through other techniques in the particular case of $M\times\R^d$, where $(M,\g)$ is a closed manifold with constant positive curvature. Namely, it can be shown that solutions \emph{bifurcate}, in the sense that there are sequences of new solutions forming branches that issue from a trivial $1$-parameter family of solutions.

In order to state our final main result, recall that a \emph{Bieberbach group} $\pi$ is a torsion-free crystallographic group, i.e., a discrete and cocompact subgroup of isometries of $\R^d$ that acts freely, so that $F=\R^d/\pi$ is a closed flat manifold. 

\begin{mainthm}\label{thm:flat}
Let $(M,\g)$ be a closed Riemannian manifold with constant positive scalar curvature and let $\pi$ be a Bieberbach group in the isometry group of $\R^d$, $d\geq2$. Then there exist infinitely many branches of $\pi$-periodic solutions to the Yamabe problem on $\big(M\times\R^d,\,\g\oplus\gf\big)$.
\end{mainthm}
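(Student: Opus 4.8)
The plan is to realize the claimed solutions as bifurcating branches issuing from the trivial family of product metrics, detecting bifurcation through a jump in the Morse index of the associated Jacobi operator. Since $\pi\subset\Iso(\R^d)$ acts by isometries of $t\,\gf$ for every $t>0$ (homotheties of Euclidean space preserve its isometry group), each rescaled flat factor descends to the closed flat manifold $F=\R^d/\pi$, so $\pi$-periodic metrics on $M\times\R^d$ correspond to metrics on the closed manifold $M\times F$, and the scaling $t$ will play the role of bifurcation parameter. First I would introduce the trivial branch $\g_t:=\g\oplus t\,\gf$, $t>0$. Each $\g_t$ is a product of a constant scalar curvature metric with a flat one, hence has constant scalar curvature $\scal_{\g_t}=\scal_\g>0$ independent of $t$; in particular every $\g_t$ solves the Yamabe problem in its own conformal class, and these metrics form a smooth $1$-parameter family with pairwise distinct conformal classes.

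Next I would set up the standard variational framework for bifurcation of constant scalar curvature metrics within a conformal class: critical points of the Hilbert--Einstein functional restricted to unit-volume conformal metrics are exactly the constant scalar curvature representatives, and the second variation along the trivial branch is governed by the Jacobi operator $J_t=-a\,\Delta_{\g_t}-\tfrac{4}{n-2}\scal_\g$ acting on mean-zero functions, where $n=\dim M+d$ and $a=\tfrac{4(n-1)}{n-2}$. A short computation shows that $J_t$ is degenerate precisely when $\tfrac{\scal_\g}{n-1}\in\spec(-\Delta_{\g_t})$ is realized by a non-constant eigenfunction. Since $\spec(-\Delta_{\g_t})=\{\mu_i+\nu_j/t\}$, with $\mu_i\in\spec(-\Delta_\g)$ on $M$ and $\nu_j\in\spec(-\Delta_\gf)$ on $F$, taking the constant eigenfunction on $M$ (so $\mu_0=0$) together with any positive eigenvalue $\nu_j>0$ of the flat Laplacian yields the degeneracy instants $t_j=\nu_j(n-1)/\scal_\g$. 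Because $F$ is a closed flat manifold, $-\Delta_\gf$ has infinitely many eigenvalues $\nu_j\to\infty$, so there are infinitely many such instants, accumulating only at $+\infty$.

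It then remains to promote these degeneracies to genuine bifurcation instants and to guarantee infinitely many distinct branches. Here I would invoke the variational bifurcation criterion: across each $t_j$ the eigenvalues $a(\mu_i+\nu_j/t)-\tfrac{4}{n-2}\scal_\g$ of $J_t$ with $\nu_j>0$ are strictly decreasing in $t$, hence cross zero transversally, and the Morse index of $J_t$ jumps by the multiplicity of the crossing eigenvalue. A nonzero jump of the Morse index at an isolated degenerate instant forces bifurcation (via the change in local topology of the sublevel sets, equivalently nontrivial spectral flow), producing a branch of nontrivial constant scalar curvature metrics conformal to $\g_t$ emanating from $\g_{t_j}$; lifted to $M\times\R^d$ these are $\pi$-periodic. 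As $t$ increases, the Morse index grows from a fixed finite value (as $t\to0^+$, where only the finitely many $M$-modes with $\mu_i<\scal_\g/(n-1)$ contribute) to $+\infty$ (as $t\to\infty$), so it jumps infinitely often, delivering the infinitely many branches.

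The main obstacle is this last analytic step. Because the Yamabe functional involves the critical Sobolev exponent, global compactness (Palais--Smale) fails and one cannot argue directly with the full nonlinear functional. However, bifurcation is a local phenomenon, and near the trivial branch the problem reduces by Lyapunov--Schmidt to the spectral analysis of the elliptic Fredholm operator $J_t$, whose second variation takes the form of the identity plus a compact operator after inverting the leading Laplacian; this renders the Morse index finite at every nondegenerate $t$ and legitimizes the variational bifurcation theorem. Care is also needed to verify transversality of the crossings and to ensure that the degeneracy instants $t_j$ furnish infinitely many \emph{distinct} branches, which follows since they accumulate only at infinity and each carries a genuine Morse-index jump.
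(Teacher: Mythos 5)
Your proposal is correct in outline but follows a genuinely different route from the paper. You take as trivial branch the canonical variation $\g\oplus t\,\gf$ (descended to the closed quotient $M\times F$, $F=\R^d/\pi$), so that the flat eigenvalues scale as $\nu_j/t$ against the fixed threshold $\scal_\g/(n-1)$ and the Morse index diverges as $t\to\infty$; this is the mechanism of \cite{LPZ12,hector}, and it requires no structural input on $\pi$ beyond the fact that every isometry of $\gf$ is an isometry of $t\,\gf$. The paper instead keeps \emph{both} factors at unit volume and deforms the flat metric on $F$ inside its moduli space of unit-volume flat metrics: the Hiss--Szczepa\'nski theorem \cite{hiss-szczepa} (reducibility of the holonomy representation of every Bieberbach group) supplies a volume-preserving anisotropic squeeze $A_t\in\mathcal C_F$, and Cheng's diameter estimate \cite{cheng75} then forces $\lambda_j(F,\h_t)\to 0$ and an unbounded index as $t\searrow 0$. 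Your route is more elementary; the paper's buys the stronger geometric assertion that the volume-normalized collapse of an arbitrary closed flat manifold already drives bifurcation, the nontriviality of that moduli space being precisely what Hiss--Szczepa\'nski provides. Three points to tighten: (i) the variational criterion (Theorem~\ref{thm:abstrcrit}) is stated for unit-volume families, so you should rescale $\g\oplus t\,\gf$ by a constant, which is harmless because the count of eigenvalues below $\scal/(n-1)$ is scale-invariant; (ii) your transversality argument covers only the modes with $\nu_j>0$ --- if $\scal_\g/(n-1)\in\spec(\Delta_\g)\setminus\{0\}$, the modes with $\nu_j=0$ produce a $t$-independent kernel and the naive nondegeneracy hypothesis fails for all $t$, so one must use the form of the criterion (as in Corollary~\ref{thm:corbifprodflat}) that tracks only the jump of the index; (iii) your bifurcating metrics are conformal to $\g\oplus t_q\,\gf$, and transporting them into $[\g\oplus\gf]$ by the dilation $x\mapsto\sqrt{t_q}\,x$ conjugates $\pi$ to $\{(A,\sqrt{t_q}\,v)\}$ --- the paper's construction has the identical feature with $A_t$ in place of the dilation, so ``$\pi$-periodic'' must be read up to affine conjugacy in either argument.
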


An important part of the above statement is that \emph{every} Bieberbach group $\pi$ acting on $\R^d$ can be realized as the period of infinitely many periodic solutions to the Yamabe problem on $M\times\R^d$. The special case in which $\pi\cong\Z^d$ is a lattice follows from a recent result in \cite{hector}, using similar techniques.
These techniques to apply variational bifurcation theory to the Yamabe problem originated in \cite{bp-calcvar,bp-pacific,LPZ12}. The main input is a $1$-parameter family $\g_t$ of highly symmetric solutions that collapse at $t=0$. By proving that the Morse index of $\g_t$ becomes unbounded, we establish the existence of a sequence of bifurcation instants accumulating at $t=0$. The proof of Theorem~\ref{thm:flat} relies on showing that for all Bieberbach groups $\pi$ acting on $\R^d$, there exists such a collapsing $1$-parameter family $\h_t$ of flat metrics on $F=\R^d/\pi$, and understanding the spectral behavior of their Laplacian. The first task is achieved using results on the holonomy representation of $(F,\h_t)$, while the second follows from estimates relating the spectrum of the Laplacian on $(F,\h_t)$ with the diameter of this manifold.
Arbitrarily small eigenvalues of the Laplacian on $(F,\h_t)$ translate into arbitrarily large Morse index for $\g\oplus\h_t$ on $M\times F$, and hence bifurcation of this family of constant scalar curvature metrics on $M\times F$. Lifting these metrics to the universal covering $M\times\R^d$, we obtain the desired $\pi$-periodic solutions.

This paper is organized as follows. Section~\ref{sec:yamabe} is an overview of existence and uniqueness of solutions to the Yamabe problem on closed manifolds. Section~\ref{sec:multiplicity-coverings} begins with a discussion of closed manifolds whose fundamental group is residually finite or has infinite profinite completion, leading to the proof of Theorem~\ref{thm:general}, from which Theorem~\ref{thm:first} and Corollary~\ref{cor:syp} follow. Finally, Theorem~\ref{thm:flat} is proved in Section~\ref{sec:bifcollapseflat}, combining bifurcation theory with the collapse of closed flat manifolds.

\subsection*{Acknowledgements} It is a pleasure to thank Claude LeBrun for suggestions that eventually led to Theorem~\ref{thm:general}, Benson Farb and Matthew Stover for suggestions concerning Proposition~\ref{prop:fgsubgroup} and Example~\ref{ex:deligne}, and Andrzej Szczepa\'nski for discussions on the holonomy group of flat manifolds and reference \cite{hiss-szczepa}.

\section{Classical Yamabe Problem}\label{sec:yamabe}

In this section, we briefly recall some facts about the classical Yamabe problem for the convenience of the reader and to establish notation. A detailed exposition can be found in Lee and Parker~\cite{lee-parker}, Aubin~\cite[Chap.\ 5]{aubin-book}, or Schoen~\cite{schoen87}.

Given a closed Riemannian manifold $(M,\g_0)$, let $[\g_0]=\{\phi \, \g_0:\phi\in H^1(M)\}$ be the Sobolev $H^1$ conformal class of $\g_0$, and consider the Hilbert-Einstein functional
\begin{equation}\label{eq:afunct}
\mathcal A\colon [\g_0]\to\R, \quad \mathcal A(\g)=\Vol(M,\g)^{\frac{2-n}{n}}\int_M\scal_\g\vol_\g,
\end{equation}
where $n=\dim M$.
It is well-known that $\g\in[\g_0]$ is a critical point of the above functional if and only if $\scal_\g$ is constant, that is, $\g$ is a solution to the Yamabe problem (see \cite{lee-parker,schoen87}). In this case, the value of the functional is clearly
\begin{equation}\label{eq:aconst}
\mathcal A(\g)=\Vol(M,\g)^{\frac{2}{n}}\scal_\g.
\end{equation}
Existence of solutions is proved by showing that \eqref{eq:afunct} always achieves a minimum. More precisely, define the \emph{Yamabe invariant} of the conformal class $[\g_0]$ as
\begin{equation}\label{eq:yamabeinvariant}
\y\!\big(M,[\g_0]\big)=\inf_{\g\in [\g_0]}\mathcal A(\g).
\end{equation}
The combined work of Yamabe~\cite{yamabe}, Trudinger~\cite{trudinger68}, Aubin~\cite{aubin76}, and Schoen~\cite{schoen84} yields the following statement, that settled the \emph{existence} problem:

\begin{theorem}\label{thm:resyamabe}
There exists a metric $\g_\mathrm Y\in[\g_0]$, called \emph{Yamabe metric}, that achieves the infimum in \eqref{eq:yamabeinvariant}, $\y\!\big(M,[\g_0]\big)=\mathcal A(\g_\mathrm Y)$. Moreover, this minimum value satisfies
\begin{equation}
\y\!\big(M,[\g_0]\big)\leq\y\!\big(\S^n,[\gr]\big)
\end{equation}
and equality holds if and only if $(M,\g_0)$ is conformally equivalent to $(\S^n,\gr)$.
\end{theorem}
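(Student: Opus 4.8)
The plan is to recast $\mathcal A$ as the \emph{Yamabe functional} and attack the resulting variational problem, whose central difficulty is the failure of compactness at the critical Sobolev exponent. Writing a conformal metric as $\g=u^{\frac{4}{n-2}}\g_0$ with $u>0$, a computation with the conformal Laplacian $L_{\g_0}=-\frac{4(n-1)}{n-2}\Delta_{\g_0}+\scal_{\g_0}$ turns $\mathcal A(\g)$ into
\begin{equation}
Q(u)=\frac{\int_M\big(\frac{4(n-1)}{n-2}\,|\nabla u|^2+\scal_{\g_0}\,u^2\big)\,\vol_{\g_0}}{\big(\int_M|u|^p\,\vol_{\g_0}\big)^{2/p}},\qquad p=\frac{2n}{n-2},
\end{equation}
so that $\y(M,[\g_0])=\inf_u Q(u)$ and positive critical points are exactly the solutions. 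Since $p$ is the critical exponent, the embedding $H^1(M)\hookrightarrow L^p(M)$ is continuous but not compact, so the direct method does not immediately produce a minimizer: minimizing sequences may concentrate into ``bubbles'' and lose mass in the limit.

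First I would establish the sphere bound and set up a subcritical approximation. For the inequality $\y(M,[\g_0])\leq\y(\S^n,[\gr])$, I would test $Q$ with functions supported in a small geodesic ball and modeled on the Aubin--Talenti extremals of the Euclidean Sobolev inequality; as the concentration parameter degenerates the local geometry is asymptotically flat, and $Q$ converges to the sphere value, giving the bound. For existence I would introduce, for each subcritical exponent $2\leq q<p$, the functional $Q_q$ obtained by replacing $p$ with $q$; here $H^1\hookrightarrow L^q$ \emph{is} compact, so the direct method yields a positive, suitably normalized minimizer $u_q$ solving the subcritical Euler--Lagrange equation.

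The heart of the argument is the dichotomy due to Aubin: if the strict inequality $\y(M,[\g_0])<\y(\S^n,[\gr])$ holds, then as $q\uparrow p$ the minimizers $u_q$ cannot concentrate, converge strongly in $H^1$ to a positive minimizer $u$ of $Q$, and $\g_\mathrm Y=u^{\frac{4}{n-2}}\g_0$ is the desired Yamabe metric. Thus everything reduces to the strict inequality whenever $(M,\g_0)$ is not conformal to $(\S^n,\gr)$, which I would split into the two classical cases. For $n\geq6$ with $M$ not locally conformally flat, I would follow Aubin and refine the bubble test functions to second order: the expansion of $Q$ then acquires a negative contribution governed by $|W_{\g_0}|^2$, the squared norm of the Weyl tensor, which is nonzero somewhere, forcing strict inequality. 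The remaining cases $n\in\{3,4,5\}$ and $M$ locally conformally flat are where the main obstacle lies, and I would invoke Schoen's resolution: using the Green's function of $L_{\g_0}$ as a test function performs a conformal blow-up producing an asymptotically flat manifold whose ADM mass appears as the leading correction to $Q$, and the Positive Mass Theorem forces this mass to be positive unless the blow-up is flat $\R^n$, again yielding the strict inequality.

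Finally I would treat rigidity. When $\y(M,[\g_0])=\y(\S^n,[\gr])$, the strict inequality fails, so by the contrapositive of the dichotomy $(M,\g_0)$ must be conformal to the round sphere; concretely, equality forces a minimizing sequence to concentrate into a single bubble with no loss of mass, which via the classification of Euclidean Sobolev extremals identifies the local---and then global---conformal structure with that of $(\S^n,\gr)$. The principal obstacle throughout is the loss of compactness at the critical exponent, and the decisive step is the strict inequality in low dimensions and in the conformally flat case, which is precisely what requires the Positive Mass Theorem rather than purely local test-function estimates.
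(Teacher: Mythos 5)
The paper does not actually prove this statement: it is quoted as the classical resolution of the compact Yamabe problem, with the proof delegated to the cited works of Yamabe, Trudinger, Aubin, and Schoen. Your outline is precisely the standard argument from those references --- reduction to the quotient $Q(u)$ with critical exponent $p=\tfrac{2n}{n-2}$, the test-function bound $\y(M,[\g_0])\leq\y(\S^n,[\gr])$, subcritical approximation plus Aubin's criterion that strict inequality restores compactness, the Weyl-tensor expansion for $n\geq6$ non--locally-conformally-flat, and the Green's function/positive mass argument in the remaining cases --- so it is the ``same approach'' in every sense that matters here. The one point you should patch is the equality case of the existence claim: your dichotomy only produces a minimizer under strict inequality, so when $(M,\g_0)$ is conformal to $(\S^n,\gr)$ you must note separately that the infimum is still attained (by the round metric itself, via the sharp Sobolev inequality or Obata's theorem); as written, your rigidity paragraph establishes the ``only if'' direction but leaves attainment on the sphere unaddressed.
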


Regarding the \emph{uniqueness} problem, by the maximum principle, if $\y\!\big(M,[\g_0]\big)\leq0$, then $\g_\mathrm Y$ is the unique solution on $(M,\g_0)$. However, if $\y\!\big(M,[\g_0]\big)>0$, there may exist several metrics $\g\in[\g_0]$ with constant scalar curvature, but all have $\scal_\g>0$. Nonuniqueness phenomena for the classical Yamabe problem have been extensively studied in the literature, see, e.g., \cite{bm-jfa,bp-calcvar,bp-pacific,hebey-vaugon1,petean-asian,pollack,hector}.

\begin{remark}\label{rem:nonuniqueisom}
Nonuniqueness of solutions on~$(M,\g_0)$ means the existence of two or more 
nonconstant conformal factors $\phi\colon M\to\R$ such that $\phi\,\g_0$ has constant scalar curvature. However, it should be noted that these different conformal factors may give rise to isometric metrics.
For instance, consider the case of the round sphere $(\S^n,\gr)$, in which a metric $\g\in[\gr]$ has constant scalar curvature if and only if it is the pull-back of $\gr$ by a conformal diffeomorphism \cite[Sec.\ 2]{schoen87}. Thus, the \emph{moduli space} of solutions is the $(n+1)$-dimensional manifold $\mathrm{Conf}(\S^n,\gr)/\mathrm{Iso}(\S^n,\gr)\cong\SO(n+1,1)_0/\SO(n+1)$, but all the corresponding metrics are isometric to $\gr$.

Moreover, it is known that if $(M,\g_0)$ is a complete manifold not conformally diffeomorphic to $(\S^n,\gr)$ or $(\R^n,\gf)$, then there exists $\g_*\in[\g_0]$ such that $\mathrm{Conf}(M,\g_0)=\mathrm{Iso}(M,\g_*)$, see \cite{ferrand,schoenCR}. 
In particular, if $\g_0=\phi\,\g_*$ has constant scalar curvature and $f\in\mathrm{Conf}(M,\g_0)$ is nontrivial, then $f^*(\g_0)=(\phi\circ f)\,\g_*$ is isometric to $\g_0$ hence also has constant scalar curvature, and $\phi\circ f$ is a different conformal factor. However, by the same reasoning, if $\g_*$ itself has constant scalar curvature, then there are no other solutions on $[\g_*]$ that are isometric to $\g_*$.
\end{remark}

\section{Multiplicity of solutions via chains of coverings}
\label{sec:multiplicity-coverings}

We begin by discussing the relationship between infinite towers of finite-sheeted regular coverings of a manifold and the profinite completion of its fundamental group, providing several examples. This is then combined with Theorem~\ref{thm:resyamabe} to prove Theorem~\ref{thm:general}, from which Theorem~\ref{thm:first} and Corollary~\ref{cor:syp} follow.

\subsection{Profinite completion and residually finite groups}\label{subsec:groups}
A group $G$ is \emph{profinite} if it is isomorphic to the limit $\varprojlim G_s$ of some inverse system $\{G_s\}_{s\in S}$ of finite groups.\footnote{Although it is usual to consider profinite groups as \emph{topological groups}, assuming that $G_s$ have the discrete topology, for the purposes of this paper we consider them solely as algebraic objects.} For instance, the group of $p$-adic integers $\Z_p=\varprojlim\Z/p^n\Z$ is profinite.
Given a finitely generated group $G$, its \emph{profinite completion} is defined as the limit 
\begin{equation}
\widehat G=\varprojlim G/\Gamma,
\end{equation}
where $\Gamma$ runs over the collection of finite index normal subgroups of $G$.
Note that $\{G/\Gamma_j\}_{\Gamma_j\lhd G, \,[G:\Gamma_j]<\infty}$ is an inverse system, where $i>j$ corresponds to $\Gamma_i\subset\Gamma_j$, and the epimorphism $\phi_{ij}\colon G/\Gamma_i\to G/\Gamma_j$ is given by $\phi_{ij}(g\Gamma_i)=g\Gamma_j$.

Clearly $\widehat G$ is a profinite group, and it is characterized by the universal property that any group homomorphism $G\to H$, where $H$ is profinite, factors uniquely through a homomorphism $\widehat G\to H$. Furthermore, there is a natural homomorphism $\iota\colon G\to\widehat G$ induced by the projections, whose kernel is $\ker\iota=\bigcap_{\Gamma\lhd G, \,[G:\Gamma]<\infty} \Gamma$.

Groups $G$ for which $\iota$ is injective are called \emph{residually finite}. Equivalently, $G$ is residually finite if for any $g\in G\setminus\{e\}$, there exists a finite index subgroup $\Gamma\subset G$ such that $g\notin \Gamma$. Elementary arguments show that such $\Gamma$ may be assumed to be normal, as it can be replaced by $\mathrm{core}(\Gamma)=\bigcap_{h\in G} h\Gamma h^{-1}$, whose index satisfies $[G:\mathrm{core}(\Gamma)]\leq [G:\Gamma]!$. 
Key properties of residual finiteness are that if $G$ is residually finite, then so are all of its subgroups; and if $H$ is a finite index residually finite subgroup of $G$, then also $G$ is residually finite.

\subsection{Examples}\label{subsec:examples}
Let us mention a few classes of examples of finitely generated groups with infinite profinite completion, as well as some closed manifolds whose fundamental groups satisfy this property, which is a hypothesis in Theorem~\ref{thm:general}.

Our main source of examples is the class of infinite residually finite groups. Since their natural homomorphism $\iota\colon G\to\widehat G$ is injective, these groups trivially have infinite profinite completion. For example, every finitely generated abelian (or, more generally, nilpotent) group is residually finite. The following classical result provides a very rich family of finitely generated residually finite groups:

\begin{smlemma}
Finitely generated linear groups are residually finite.
\end{smlemma}

Recall that a group is \emph{linear} if it is isomorphic to a subgroup of $\GL(n,\C)$ for some $n\in\N$. A proof of the Selberg-Malcev Lemma can be found in~\cite[Sec.\ 7.6]{ratcliffe}.

\begin{example}\label{ex:infiniteresfinite}
The fundamental group of any locally symmetric space $\Sigma$ of noncompact type is residually finite (and infinite, if $\Sigma$ is closed).
Indeed, if $\Sigma$ is irreducible, then $\widetilde\Sigma=\G/\K$, where $\G$ is a semisimple noncompact Lie group, $\K$ is the maximal compact subgroup, and $\Sigma=\widetilde\Sigma/\Gamma$, where $\Gamma$ is a discrete torsion-free subgroup of $\G$. 
The image of $\Gamma$ under the adjoint representation $\Ad\colon \G\to\GL(\mathfrak g)$ is linear, and $\Ad(\Gamma)\cong\Gamma/\mathrm{Z}(\G)\cap\Gamma$. Assuming that the $\G$-action on $\G/\K$ is effective, $\mathrm{Z}(\G)$ is trivial, hence $\Gamma\cong\Ad(\Gamma)$ is residually finite as a consequence of the Selberg-Malcev Lemma. In particular, the fundamental group of any space form of nonpositive curvature is infinite and residually finite, hence has infinite profinite completion.
\end{example}

It follows from the proof of the Geometrization Conjecture that the fundamental group of any $3$-manifold is residually finite. For a topological viewpoint on residual finiteness of fundamental groups, see Reid~\cite[Sec.\ 2.1]{long-reid}. We remark that there exist \emph{nonlinear} finitely generated residually finite groups \cite{drusap}.

There also exist many finitely generated groups that are not residually finite but have infinite profinite completion, such as the \emph{Baumslag-Solitar groups}
\begin{equation*}
\mathrm{BS}(m,n)=\left\langle a,b : b\,a^m\, b^{-1}=a^n \right\rangle\!, \quad m>n>1.
\end{equation*}
Since this is a finitely presented group, it can be realized as the fundamental group of a closed manifold.
A useful tool to produce finitely generated groups with infinite profinite completion that are not necessarily residually finite is the following, communicated to us by B. Farb:

\begin{proposition}\label{prop:fgsubgroup}
If $\LL$ is a finitely generated infinite subgroup of a Lie group $\G$, then $\LL$ has infinite profinite completion.
\end{proposition}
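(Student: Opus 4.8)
The plan is to show directly that $\LL$ admits an infinite nested sequence of finite-index normal subgroups, which is equivalent to $\widehat\LL$ being infinite by the characterization recalled above. Since $\LL$ is finitely generated, this is in turn equivalent to $\LL$ possessing finite-index subgroups of arbitrarily large index, and I will repeatedly invoke two elementary permanence facts: first, if $N\le\LL$ has finite index and $\widehat N$ is infinite, then $\widehat\LL$ is infinite, because $[\LL:K]=[\LL:N]\,[N:K]$ forces finite-index subgroups $K\le N$ of unbounded index to remain of unbounded index in $\LL$; second, if $\LL$ surjects onto a group $Q$ with $\widehat Q$ infinite, then $\widehat\LL$ is infinite, obtained by pulling back finite-index subgroups. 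The overall strategy is to feed $\LL$ into a linear representation so that the Selberg--Malcev Lemma applies.

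Concretely, assume first that $\G$ is connected and consider the adjoint representation $\Ad\colon\G\to\GL(\mathfrak g)$. The image $\Ad(\LL)$ is a finitely generated subgroup of $\GL(\mathfrak g)$, hence linear, so by the Selberg--Malcev Lemma it is residually finite. The argument then splits according to whether $\Ad(\LL)$ is infinite. If $\Ad(\LL)$ is infinite, then being residually finite it injects into its own profinite completion, so $\widehat{\Ad(\LL)}$ must be infinite; since $\LL\twoheadrightarrow\Ad(\LL)$, the second permanence fact gives that $\widehat\LL$ is infinite.

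If instead $\Ad(\LL)$ is finite, set $K=\ker(\Ad|_\LL)=\LL\cap\centr(\G)$, using that the kernel of the adjoint representation of a connected Lie group is its center $\centr(\G)$. Then $K$ has finite index in $\LL$, equal to $|\Ad(\LL)|$, so $K$ is again finitely generated, and it is infinite because $\LL$ is infinite while $[\LL:K]<\infty$. Crucially, $K$ is a subgroup of the abelian group $\centr(\G)$, hence a finitely generated infinite abelian group; such a group is residually finite and clearly has infinite profinite completion, as it surjects onto $\Z$. By the first permanence fact, $\widehat\LL$ is infinite, which settles the connected case.

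Finally, for general $\G$ with finitely many components I would reduce to the connected case by intersecting with the identity component $\G_0$: the subgroup $\LL\cap\G_0$ has finite index in $\LL$, is therefore finitely generated, and is infinite since $[\LL:\LL\cap\G_0]<\infty$ while $\LL$ is infinite; applying the connected case to $\LL\cap\G_0\le\G_0$ together with the first permanence fact yields the conclusion. I expect the genuine obstacle to be the branch where $\Ad(\LL)$ is finite: there the Selberg--Malcev input is vacuous, and one must instead exploit that $\LL$ virtually lands in the abelian center of (the identity component of) $\G$, so that an infinite finitely generated \emph{abelian} subgroup can be extracted. This is precisely where connectedness of the identity component enters, ensuring $\ker\Ad$ is abelian, and it is the only point at which the Lie-theoretic structure of $\G$, rather than mere linearity, is used.
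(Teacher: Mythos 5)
Your argument is correct and follows essentially the same route as the paper's proof: apply the Selberg--Malcev Lemma to $\Ad(\LL)$ and split into cases according to whether $\Ad(\LL)$ is infinite (pushing the infinite profinite completion forward along the surjection $\LL\twoheadrightarrow\Ad(\LL)$) or finite (extracting the finite-index, finitely generated, infinite abelian subgroup $\LL\cap\centr(\G)$). Your only departures --- phrasing the last step via finite-index permanence of infinite profinite completion rather than via residual finiteness of $\LL$, and the explicit reduction to the identity component so that $\ker\Ad=\centr(\G)$ --- are minor refinements of the same argument.
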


\begin{proof}
Consider the image of $\LL$ under adjoint representation $\Ad\colon\G\to\GL(\mathfrak g)$. By the Selberg-Malcev Lemma, $\Ad(\LL)$ is residually finite. Thus, if $\Ad(\LL)$ is infinite, then so is $\iota(\Ad(\LL))\subset\widehat{\Ad(\LL)}$. By the universal property of $\LL$, since $\widehat{\Ad(\LL)}$ is profinite, the homomorphism $\iota\circ\Ad$ factors through a homomorphism $\widehat\LL\to\iota(\Ad(\LL))$, hence $\widehat\LL$ must also be infinite. Else, if $\Ad(\LL)$ is finite, then $\LL\cap\mathrm{Z}(\G)$ has finite index in $\LL$ and is hence a finitely generated abelian group. Thus, $\LL\cap\mathrm{Z}(\G)$ is residually finite, and hence so is $\LL$. This implies that $\LL\to\widehat\LL$ is injective, so $\widehat\LL$ is infinite.
\end{proof}

\begin{example}\label{ex:deligne}
According to Deligne~\cite{deligne}, the universal central extension of $\Sp(2n,\Z)$, which is the inverse image $\widetilde{\Sp(2n,\Z)}$ of $\Sp(2n,\Z)$ in the universal covering of $\Sp(2n,\R)$, is not residually finite for all $n\geq2$. However, $\widetilde{\Sp(2n,\Z)}$ is a lattice in a connected Lie group, hence it has infinite profinite completion by Proposition~\ref{prop:fgsubgroup}.
\end{example}

\begin{example}
Any group with positive first (rational) Betti number has infinite profinite completion.
\end{example}

\begin{remark}
It is generally difficult to exhibit finitely generated groups with \emph{finite} profinite completion, besides simple groups. One such example is the \emph{Higman group}
\begin{equation*}
\mathrm{Hig}=\left\langle a,b,c,d : a^{-1}\,b\,a=b^2,\ b^{-1}\,c\,b=c^2,\ c^{-1}\,d\,c=d^2,\ d^{-1}\,a\,d=a^2 \right\rangle\!,
\end{equation*}
which is infinite, finitely presented, and has no proper normal subgroups of finite index.
\end{remark}

\subsection{Coverings}
Let $\Sigma$ be a closed manifold with fundamental group $G=\pi_1(\Sigma)$. Recall that there is a natural bijective correspondence between conjugacy classes of subgroups of $G$ and equivalence classes of coverings of $\Sigma$. The trivial subgroup of $G$ corresponds to the universal covering $\widetilde\Sigma\to\Sigma$. A normal subgroup $\Gamma\lhd G$ of index $n=[G:\Gamma]$ corresponds to the $n$-sheeted regular covering $\widetilde\Sigma/\Gamma\to\Sigma$, where $\widetilde\Sigma/\Gamma$ is the quotient by the restriction to $\Gamma$ of the monodromy action of $G$ on $\widetilde\Sigma$. The group of deck transformations of this covering $\widetilde\Sigma/\Gamma\to\Sigma$ is $G/\Gamma$, and $\pi_1(\widetilde\Sigma/\Gamma)=\Gamma$.

\begin{lemma}\label{lemma:largevol}
Let $\Sigma$ be a closed manifold, $G=\pi_1(\Sigma)$. The following are equivalent:
\begin{enumerate}[{\rm (i)}]
\item $G$ has infinite profinite completion;
\item There exists an infinite nested sequence of normal subgroups $\Gamma_j\lhd G$,
\begin{equation*}
\dots\subsetneq\Gamma_j\subsetneq\dots\subsetneq\Gamma_2\subsetneq\Gamma_1\subsetneq G,
\end{equation*}
with finite index $n_j=[G:\Gamma_j]$, $2\leq n_j<\infty$;
\item For any $V>0$ and any Riemannian metric $\h$ on $\Sigma$, there is a finite-sheeted regular covering $\rho\colon\widetilde{\Sigma}/\Gamma\to\Sigma$ such that $\Vol(\widetilde\Sigma/\Gamma,\rho^*\h)>V$.
\end{enumerate}
\end{lemma}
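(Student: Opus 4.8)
The plan is to prove the three conditions equivalent by a cyclic chain of implications, (i) $\Rightarrow$ (ii) $\Rightarrow$ (iii) $\Rightarrow$ (i). Two facts will be used throughout. First, since $\Sigma$ is closed, $G=\pi_1(\Sigma)$ is finitely generated. Second, a finite-index normal subgroup $\Gamma\lhd G$ of index $n$ corresponds to an $n$-sheeted regular covering $\rho\colon\widetilde\Sigma/\Gamma\to\Sigma$, and the pullback metric satisfies $\Vol(\widetilde\Sigma/\Gamma,\rho^*\h)=n\,\Vol(\Sigma,\h)$, since the covering is a local isometry and each of the $n$ sheets contributes one copy of $\Vol(\Sigma,\h)$.

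For (i) $\Rightarrow$ (ii), I would prove the contrapositive $\neg$(ii) $\Rightarrow\neg$(i). Assume there is no infinite strictly descending chain of finite-index normal subgroups. Then the indices of all such subgroups are bounded: otherwise, choosing normal subgroups $\Delta_j$ of strictly increasing index and setting $\Gamma_j=\Delta_1\cap\dots\cap\Delta_j$ would give a weakly descending chain of finite-index normal subgroups with $[G:\Gamma_j]\geq[G:\Delta_j]\to\infty$, and passing to the subsequence on which the index strictly increases would produce exactly the forbidden chain. Once the indices are bounded by some $N$, the decisive input is that a finitely generated group has only finitely many subgroups of each fixed index $m$, since each such subgroup corresponds to one of the finitely many homomorphisms $G\to S_m$. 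Hence there are only finitely many finite-index normal subgroups; their intersection $K$ is again a finite-index normal subgroup, and as $K\subseteq\Gamma$ for every finite-index normal $\Gamma$, the quotient $G/K$ dominates the inverse system, so that $\widehat G\cong G/K$ is finite, i.e. $\neg$(i).

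For (ii) $\Rightarrow$ (iii), the chain yields regular coverings $\rho_j\colon\widetilde\Sigma/\Gamma_j\to\Sigma$ of index $n_j$; since each inclusion $\Gamma_{j+1}\subsetneq\Gamma_j$ is proper we have $n_{j+1}\geq 2n_j$, so $n_j\to\infty$ and $\Vol(\widetilde\Sigma/\Gamma_j,\rho_j^*\h)=n_j\,\Vol(\Sigma,\h)\to\infty$, whence for any prescribed $V>0$ a sufficiently large $j$ works. For (iii) $\Rightarrow$ (i), I fix any metric $\h$ on $\Sigma$; arbitrarily large covering volumes force finite-index normal subgroups $\Gamma$ of arbitrarily large index $[G:\Gamma]=\Vol(\widetilde\Sigma/\Gamma,\rho^*\h)/\Vol(\Sigma,\h)$. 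Since the surjection $G\to G/\Gamma$ factors through $\widehat G$, each projection $\widehat G\to G/\Gamma$ is surjective, so $|\widehat G|\geq[G:\Gamma]$ for all these $\Gamma$, forcing $\widehat G$ to be infinite.

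I expect the \emph{main obstacle} to be the step (i) $\Rightarrow$ (ii), where infiniteness of the purely algebraic inverse limit $\widehat G$ must be converted into a concrete strictly descending chain of finite index. The delicate point is excluding the possibility of infinitely many finite-index normal subgroups of uniformly bounded index; this is precisely where finite generation of $G$ is indispensable, through the finiteness of $\operatorname{Hom}(G,S_m)$. By contrast, the implications (ii) $\Rightarrow$ (iii) and (iii) $\Rightarrow$ (i) reduce to elementary covering-space and volume bookkeeping.
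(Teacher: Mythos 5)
Your proof is correct and follows essentially the same route as the paper, which disposes of (i)$\Leftrightarrow$(ii) by appealing to the definition of an inverse limit and of (ii)$\Leftrightarrow$(iii) by the covering--subgroup correspondence together with $\Vol(\widetilde\Sigma/\Gamma,\rho^*\h)=[G:\Gamma]\Vol(\Sigma,\h)$. The only difference is that you fully unpack the step the paper leaves implicit, correctly identifying that finite generation of $\pi_1(\Sigma)$ (via the finiteness of $\operatorname{Hom}(G,S_m)$) is what rules out infinitely many normal subgroups of bounded index.
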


\begin{proof}
The equivalence between (i) and (ii) follows from the definition of limit of an inverse system. The equivalence between (ii) and (iii) follows from the correspondence between normal subgroups $\Gamma\lhd G$ and regular coverings $\widetilde\Sigma/\Gamma\to\Sigma$ described above, using the fact that $\Vol(\widetilde\Sigma/\Gamma,\rho^*\h)=[G:\Gamma]\Vol(\Sigma,\h)$.
\end{proof}

\begin{remark}
The statements (ii) and (iii) in Lemma~\ref{lemma:largevol} remain equivalent to (i) even if we remove the words ``normal" from (ii) and ``regular" from (iii).
\end{remark}

\subsection{Multiplicity of solutions}
We now combine the above discussion of covering spaces with Theorem~\ref{thm:resyamabe} to prove Theorem~\ref{thm:general}.

\begin{proof}[Proof of Theorem~\ref{thm:general}]
Let $\lambda_0$ be the smallest nonnegative constant such that $\lambda_0\geq-\frac{\scal_\h}{\scal_\g}$. Fix $\lambda>\lambda_0$ and consider the product manifold $\big(M\times\Sigma,\g\oplus\lambda\,\h\big)$, which has constant positive scalar curvature.
We are going to define a sequence $\gg_j$ of metrics on finite-sheeted regular coverings of $M\times\Sigma$ with constant positive scalar curvature, such that the pull-backs of $\gg_j$ to $M\times\widetilde\Sigma$ provide the desired periodic solutions.

If $\mathcal A(\g\oplus\lambda\,\h)>\y\!\big(M\times\Sigma,[\g\oplus\lambda\,\h]\big)$, then let $\gg_1$ be a Yamabe metric in $[\g\oplus\lambda\,\h]$, see Theorem~\ref{thm:resyamabe}. Otherwise, if $\mathcal A(\g\oplus\lambda\,\h)=\y\!\big(M\times\Sigma,[\g\oplus\lambda\,\h]\big)$, i.e., $\g\oplus\lambda\,\h$ is a Yamabe metric, then by Lemma~\ref{lemma:largevol} there is a finite-sheeted regular covering $\Sigma_1\to\Sigma$ such that the product map $p_1\colon M\times\Sigma_1\to M\times\Sigma$ of the identity in $M$ and $\Sigma_1\to\Sigma$ satisfies
\begin{equation*}
\Vol\!\big(M\times\Sigma_1,\,p_1^*(\g\oplus\lambda\,\h)\big)^{\frac{2}{n}}\scal_{\g\oplus\lambda\,\h}>\y\!\big(\S^n,[\gr]\big).
\end{equation*}
By Theorem~\ref{thm:resyamabe}, there is a Yamabe metric $\gg_1$ in $[p_1^*(\g\oplus\lambda\,\h)]$ with constant positive scalar curvature. Note that $\gg_1$ is not isometric to $p_1^*(\g\oplus\lambda\,\h)$ since they lie in different levelsets of the functional $\mathcal A$ on this conformal class.

The fundamental group of $\Sigma_1$ is a finite index normal subgroup of $\pi_1(\Sigma)$, hence its profinite completion is also infinite.
 Applying Lemma~\ref{lemma:largevol} again, there is a finite-sheeted regular covering $\Sigma_2\to\Sigma_1$ such that the product map $p_2\colon M\times \Sigma_2\to M\times \Sigma_1$ of the identity in $M$ and $\Sigma_2\to\Sigma_1$ satisfies
\begin{equation*}
\Vol\!\big(M\times\Sigma_2,\,p_2^*(\gg_1)\big)^{\frac{2}{n}}\scal_{\gg_1}>\y\!\big(\S^n,[\gr]\big).
\end{equation*}
By Theorem~\ref{thm:resyamabe}, there is a Yamabe metric $\gg_2$ in $[p_2^*(\gg_1)]$ with constant positive scalar curvature. Once more, $\gg_2$ is not isometric to $p_2^*(\gg_1)$ since they have different values of $\mathcal A$.

Proceeding inductively in the above manner, we obtain an infinite sequence of finite-sheeted regular coverings
\begin{equation*}
\dots\longrightarrow\Sigma_j\longrightarrow\dots\longrightarrow\Sigma_2\longrightarrow\Sigma_1\longrightarrow\Sigma,
\end{equation*}
such that the maps $p_j\colon M\times\Sigma_j\to M\times\Sigma_{j-1}$ satisfy
\begin{equation*}
\Vol\!\big(M\times\Sigma_j,\,p_j^*(\gg_{j-1})\big)^{\frac{2}{n}}\scal_{\gg_{j-1}}>\y\!\big(\S^n,[\gr]\big),
\end{equation*}
and $\gg_j$ is a Yamabe metric in $[p_j^*(\gg_{j-1})]$, which has constant positive scalar curvature and is not isometric to $p_j^*(\gg_{j-1})$.
The pull-backs of $\gg_j$ to $M\times\widetilde\Sigma$ clearly lie in the conformal class of $\g\oplus\lambda\,\widetilde\h$ and correspond to pairwise different conformal factors, providing the desired infinitely many periodic solutions.
\end{proof}

\begin{remark}
The main technique in the above proof can be seen as an extension of some arguments of Hebey and Vaugon~\cite{hebey-vaugon1} to a more general class of manifolds.
\end{remark}

\begin{remark}\label{rem:nonisomthm}
Despite the fact that at each step in the above construction the new metric $\gg_j$ with constant scalar curvature on $M\times\Sigma_j$ is not isometric to the previous one $p_j^*(\gg_{j-1})$, in this level of generality, we cannot guarantee that their pull-backs to $M\times\widetilde\Sigma$ remain nonisometric. This corresponds to determining whether two distinct conformal factors can be obtained from one another by composition with a conformal diffeomorphism, see Remark~\ref{rem:nonuniqueisom}.

More information in this regard may be available in some particular cases, such as when $M\times\widetilde\Sigma$ is $\S^m\times \H^d$ with its standard metric. Since the conformal group of $(\S^m\times \H^d,\gr\oplus\ghyp)$ coincides with its isometry group, it follows that none of the infinitely many new metrics with constant scalar curvature are isometric to $\gr\oplus\ghyp$. However, some of these new metrics may be isometric to one another.
\end{remark}

As explained in the Introduction, Theorem~\ref{thm:first} is a consequence of Theorem~\ref{thm:general}, Example~\ref{ex:infiniteresfinite}, and \cite[Thm.\ A]{borel}.

\subsection{Singular Yamabe problem}\label{subsec:syp}
Given a closed manifold $(M,\g)$ and a closed subset $\Lambda\subset M$, the singular Yamabe problem consists of finding a complete metric $\g'$ on $M\setminus\Lambda$ that has constant scalar curvature and is conformal to $\g$. In other words, these are solutions to the Yamabe problem on $M$ that blow up on $\Lambda$. Consider the case in which $(M,\g)$ is the round sphere $(\S^m,\gr)$ and $\Lambda=\S^k$ is a round subsphere, which was also studied in \cite{bps-jdg,mazzeo91,mazzeo-smale,schoen87}. There is a conformal equivalence
\begin{equation}\label{eq:confss}
f\colon \big(\S^m\setminus \S^k,\gr\big)\to \big(\S^{m-k-1}\times\H^{k+1},\gr\oplus\ghyp\big),
\end{equation}
given by first using the stereographic projection with a point in $\S^k$ to obtain a conformal equivalence with $(\R^m\setminus\R^k,\gf)$, and second using cylindrical coordinates $\gf=\dd r^2+r^2\dd\theta^2+\dd y^2$ to conclude that $\tfrac{1}{r^2}\gf=\gr\oplus\ghyp$, see also \cite{bps-jdg,mazzeo-smale}.

The conformal equivalence \eqref{eq:confss} provides a trivial solution $f^*(\gr\oplus\ghyp)$ to the singular Yamabe problem on $\S^m\setminus \S^k$, with constant scalar curvature equal to $\scal_{m,k}=(m-2k-2)(m-1)$. If $k>(m-2)/2$, then $\scal_{m,k}<0$ and this is the unique solution by an argument involving the asymptotic maximum principle~\cite{mazzeo91}. Furthermore, this trivial solution is the unique \emph{periodic} solution if $k=(m-2)/2$, since in this case $\scal_{m,k}=0$ and any two conformal metrics with vanishing scalar curvature on a closed manifold are homothetic~\cite[p.\ 175]{aubin-book}. Thus, nonuniqueness of periodic solutions on $\S^m\setminus\S^k$ is only possible in the range $0\leq k<(m-2)/2$.
The existence of infinitely many periodic solutions on this entire range (Corollary~\ref{cor:syp}) follows from Theorem~\ref{thm:first} applied to $\S^{m-k-1}\times\H^{k+1}$ and $\S^{m-1}\times\R$, together with the conformal equivalences \eqref{eq:confss} and $\S^m\setminus\{\pm p\}\cong\R^m\setminus\{0\}\cong\S^{m-1}\times\R$.
Existence of infinitely many (nonisometric) periodic solutions if $\Lambda=\S^1$ is a great circle, i.e., $k=1$, $m\geq5$, was recently obtained using bifurcation techniques \cite{bps-jdg}.

By Remark~\ref{rem:nonisomthm}, none of these periodic solutions on $\S^m\setminus\S^k$ are isometric to the trivial solution. We conjecture that, furthermore, they are pairwise nonisometric.

\section{Multiplicity of solutions via collapse of flat manifolds}
\label{sec:bifcollapseflat}

In this section, we employ another method to obtain multiplicity of solutions to the Yamabe problem on noncompact product manifolds using bifurcation theory. This technique provides further information on the local structure of the space of solutions, and has been previously applied to the Yamabe problem in \cite{bp-calcvar,bp-pacific,bps-jdg,hector}.

\subsection{Flat manifolds} 
Let $(F,\h)$ be a closed flat manifold. It is well-known that $(F,\h)$ is isometric to the orbit space $\R^d/\pi$ of a free isometric action on $\R^d$ of a discrete cocompact group $\pi$, the fundamental group of $F$. Often, such groups are called \emph{Bieberbach groups}, and, accordingly, $F$ is called a \emph{Bieberbach manifold}. In what follows, for the convenience of the reader, we provide an overview of basic facts regarding such groups and manifolds; for more details see \cite{bdp,buser-bieber,charlap,szczepa-book,wolf-flat}.

Let $\aff=\GL(d)\ltimes\R^d$ be the group of affine transformations of $\R^d$, and $\Iso(\R^d)=\O(d)\ltimes\R^d$ be the subgroup of rigid motions. Elements of $\aff$ and $\Iso(\R^d)$ are denoted by $(A,v)$, with $A\in\GL(d)$ or $\O(d)$ and $v\in\R^d$; the group operation is $(A,v)\, (B,w)=(AB, Aw+v)$. The natural action of these groups on $\R^d$ is given by $(A,v)\cdot w=Aw+v$. We denote by
\begin{equation*}
\rot\colon\aff\longrightarrow\GL(d), \quad \rot(A,v)=A,
\end{equation*}
the projection homomorphism. Furthermore, given a subgroup $\pi\subset \Iso(\R^d)$, we denote by $\tra(\pi)$ the normal subgroup of \emph{pure translations} in $\pi$, defined as:
\begin{equation*}
\tra(\pi)=\pi\cap\ker(\rot).
\end{equation*}
Note that there is a short exact sequence
\begin{equation}
1\longrightarrow \tra(\pi)\longrightarrow \pi \longrightarrow \rot(\pi) \longrightarrow 1.
\end{equation}
A discrete subgroup $\pi\subset\Iso(\R^d)$ is called \emph{crystallographic} if it has compact fundamental domain in $\R^d$, so that $\R^d/\pi$ is a compact flat orbifold. A crystallographic group $\pi$ acts freely in $\R^d$ if and only if it is torsion-free, in which case $\R^d/\pi$ is a closed flat manifold, and $\pi$ is called a \emph{Bieberbach group}.

The most important facts about such groups are summarized by the following results of Bieberbach, which provided an answer to Hilbert's 18th problem:

\begin{bieber}[\sc Algebraic version]
The following hold:
\begin{enumerate}[\rm I.]
\item
If $\pi\subset\Iso(\R^d)$ is a crystallographic group, then $\rot(\pi)$ is finite and $\tra(\pi)$ is a lattice that spans $\R^d$.
\item
Let $\pi,\pi'\subset\Iso(\R^d)$ be crystallographic subgroups. If there exists an isomorphism $f\colon\pi\to\pi'$, then $f$ is a conjugation in $\aff$, i.e., there exists $\alpha\in\aff$ such that $f(\beta)=\alpha\beta\alpha^{-1}$ for all $\beta\in\pi$.
\item
For all $d$, there are only finitely many isomorphism classes of crystallographic subgroups of $\Iso(\R^d)$.
\end{enumerate}
\end{bieber}
The geometric interpretation of these statements in terms of Bieberbach manifolds $F=\R^d/\pi$ is as follows:

\begin{bieber}[\sc Geometric version]
The following hold:
\begin{enumerate}[\rm I.]
\item
If $(F,\h)$ is a closed flat manifold with $\dim F=d$, then $(F,\h)$ is covered by a flat torus of dimension $d$, and the covering map is a local isometry.
\item
If $F$ and $F'$ are closed flat manifolds of the same dimension with isomorphic fundamental groups, then $F$ and $F'$ are affinely equivalent.
\item
For all $d$, there are only finitely many affine equivalence classes of closed flat manifolds of dimension $d$.
\end{enumerate}
\end{bieber}

The torus covering $F=\R^d/\pi$ is given by $T^d=\R^d/\tra(\pi)$, and $\rot(\pi)\subset\O(d)$ is the holonomy group of $(F,\h)$. Since the holonomy of a Riemannian manifold depends only on its affine structure, it follows that any two flat metrics on a closed manifold $F$ have isomorphic holonomy groups.

\subsection{Moduli space of flat metrics}
The moduli space $\mathcal M_{\text{flat}}(F)$ of flat metrics on a closed manifold $F=\R^d/\pi$ can be determined from the algebraic data in~$\pi$, see \cite{bdp,wolf-flat} for details.

In what follows, given groups $H\subset G$, we denote by $\norm_G(H)$ and $\centr_G(H)$ the normalizer and centralizer of $H$ in $G$, respectively.

\begin{lemma}\label{thm:corAOn}
Two compact  flat $d$-manifolds $F=\R^d/\pi$ and $F'=\R^d/\pi'$ are isometric if and only if there is $(B,w)\in\Iso(\R^d)$ such that $(B,w)\,\pi\,(B,w)^{-1}=\pi'$. Moreover, if $\pi,\pi'\subset\Iso(\R^d)$ are isomorphic Bieberbach groups, i.e., $(A,v)\,\pi\,(A,v)^{-1}=\pi'$ for some $(A,v)\in\aff$, then $F=\R^d/\pi$ and $F'=\R^d/\pi'$ are isometric if and only if $A=B \, C$, with $B\in\O(d)$  and $C\in \mathcal N_\pi:=\rot\big(\!\norm_{\aff}(\pi)\big)\subset\GL(d,\R)$.
\end{lemma}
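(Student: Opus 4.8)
The plan is to prove the two equivalences in turn, treating the first as a standard consequence of covering space theory and reserving the genuine content for the second. Throughout I will use that $(\R^d,\gf)$ is the Riemannian universal cover of both $F=\R^d/\pi$ and $F'=\R^d/\pi'$, with $\pi$ and $\pi'$ acting as the respective deck transformation groups by isometries, and that the quotient metrics are induced from $\gf$.

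For the first equivalence, the ``if'' direction is immediate: if $(B,w)\in\Iso(\R^d)$ satisfies $(B,w)\,\pi\,(B,w)^{-1}=\pi'$, then $(B,w)$ is equivariant, mapping $\pi$-orbits to $\pi'$-orbits, since for $\gamma\in\pi$ one has $(B,w)\cdot(\gamma\cdot x)=\big((B,w)\gamma(B,w)^{-1}\big)\cdot\big((B,w)\cdot x\big)$ and the conjugate lies in $\pi'$. Hence $(B,w)$ descends to a well-defined map $F\to F'$, which is an isometry because $(B,w)$ is an isometry of $(\R^d,\gf)$ and both metrics are induced from $\gf$. For the ``only if'' direction I would take an isometry $\bar f\colon F\to F'$, lift it to an isometry $\tilde f$ of the universal covers—every self-isometry of $\R^d$ is an element $(B,w)\in\Iso(\R^d)$—and invoke the standard fact that a lift conjugates deck groups: for $\gamma\in\pi$, the map $\tilde f\gamma\tilde f^{-1}$ covers the identity on $F'$, hence lies in $\pi'$, and applying the same argument to $\bar f^{-1}$ yields $(B,w)\,\pi\,(B,w)^{-1}=\pi'$.

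For the second equivalence, fix the affine conjugation $(A,v)\in\aff$ with $(A,v)\,\pi\,(A,v)^{-1}=\pi'$, provided by hypothesis. By the first equivalence, $F$ and $F'$ are isometric precisely when there exists $(B,w)\in\Iso(\R^d)$ with $(B,w)\,\pi\,(B,w)^{-1}=\pi'$. The key observation is that any two conjugating elements differ by a normalizer element: if $(B,w)$ is such an isometric conjugation, then $(B,w)^{-1}(A,v)$ normalizes $\pi$, so writing $(C,u):=(B,w)^{-1}(A,v)\in\norm_{\aff}(\pi)$ gives $(A,v)=(B,w)\,(C,u)$. Applying the homomorphism $\rot$ yields $A=B\,C$ with $B=\rot(B,w)\in\O(d)$ and $C=\rot(C,u)\in\mathcal N_\pi$, proving the forward implication. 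Conversely, given $A=B\,C$ with $B\in\O(d)$ and $C=\rot(C,u)$ for some $(C,u)\in\norm_{\aff}(\pi)$, I would form $(A,v)(C,u)^{-1}\in\aff$; its rotational part is $A\,C^{-1}=B\in\O(d)$, so in fact $(A,v)(C,u)^{-1}\in\Iso(\R^d)$, and since $(C,u)$ normalizes $\pi$ one computes $\big[(A,v)(C,u)^{-1}\big]\,\pi\,\big[(A,v)(C,u)^{-1}\big]^{-1}=(A,v)\,\pi\,(A,v)^{-1}=\pi'$, whence the first equivalence again gives that $F$ and $F'$ are isometric.

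The main obstacle is not any single hard step but rather the careful bookkeeping in the second equivalence: one must track the noncommutative semidirect-product law $(A,v)(B,w)=(AB,Aw+v)$ through the various products and inverses, and verify that the normalizer condition $(C,u)\,\pi\,(C,u)^{-1}=\pi$ transports correctly under conjugation by $(A,v)$. The one genuinely conceptual point, which I would state explicitly, is that passing from the full set of affine conjugations (a coset of $\norm_{\aff}(\pi)$) to the subset realizable inside $\Iso(\R^d)$ is exactly the requirement that the rotational part $A$ lie in $\O(d)\cdot\mathcal N_\pi$; the remaining equivariance and lifting arguments of the first equivalence are routine.
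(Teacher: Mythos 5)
Your proposal is correct and follows essentially the same route as the paper: the first equivalence is obtained by lifting/descending isometries through the universal cover $\R^d$, and the second by observing that two affine conjugations taking $\pi$ to $\pi'$ differ by an element of $\norm_{\aff}(\pi)$, then applying $\rot$; your converse step of forming $(A,v)(C,u)^{-1}$ and checking its rotational part lies in $\O(d)$ is just the paper's explicit choice $w=v-Bz$ in disguise.
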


\begin{proof}
The first statement follows from lifting an isometry between $F$ and $F'$ to an isometry of $\R^d$.
Thus, if $\pi$ and $\pi'$ are isomorphic, then $F$ and $F'$ are isometric if and only if there exists $(B,w)\in\Iso(\R^d)$ such that $(B,w)\,\pi\,(B,w)^{-1}=(A,v)\,\pi\,(A,v)^{-1}$, i.e., $(C,z):=(B,w)^{-1}(A,v)\in\norm_{\aff}(\pi)$. If $F$ and $F'$ are isometric, then clearly $A=B \, C$, with $B\in\O(d)$  and $C\in\mathcal N_\pi$. Conversely, assume $A=B \, C$, with $B\in\O(d)$  and $C\in\mathcal N_\pi$. By definition, there exists $z\in\R^d$ such that $(C,z)\in\norm_{\aff}(\pi)$. Set $w=v-Bz$, so that  $(A,v)=(B,w)\, (C,z)$. Clearly, $(B,w)^{-1}(A,v)\in\norm_{\aff}(\pi)$, so $F$ and $F'$ are isometric.
\end{proof}

We associate to each closed flat manifold $F=\R^d/\pi$ the closed cone
\begin{equation}\label{eq:newXPhi}
\begin{aligned}
\mathcal C_F:&=\big\{A\in\GL(d,\R):A\,B\, A^{-1}\in\O(d) \mbox{ for all } B\in \rot(\pi)\big\}\\
&=\big\{A\in\GL(d,\R):A^{\rm t}A\in\centr_{\GL(d,\R)}\!\big(\rot(\pi)\big)\!\big\},
\end{aligned}
\end{equation}
where $A^{\rm t}$ is the transpose of $A$. It is easy to verify that $\mathcal C_F$ contains $\norm_{\GL(d,\R)}\!\big(\rot(\pi)\big)$, and if $A\in\mathcal C_F$, then $({A^t})^{-1}\in\mathcal C_F$. 
Moreover, left-multiplication defines an $\O(d)$-action on $\mathcal C_F$, and right-multiplication defines a $\norm_{\GL(d,\R)}\!\big(\rot(\pi)\big)$-action on $\mathcal C_F$.

Given a Bieberbach group $\pi\subset\Iso(\R^d)$, denote by $\h_\pi$ the flat metric on $F=\R^d/\pi$ for which the covering map $\R^d\to\R^d/\pi$ is Riemannian, i.e., a local isometry. The following characterization of the moduli space $\mathcal M_{\text{\rm flat}}(F)$ can be found in \cite{bdp,wolf-flat}.

\begin{proposition}\label{prop:flatmoduli}
For any flat metric $\h$ on $F$, there exists $A\in \mathcal C_F$ and $v\in\R^d$ such that $\h$ is isometric to $\h_{\pi'}$, where $\pi'=(A,v)\,\pi\,(A,v)^{-1}\subset\Iso(\R^d)$. Furthermore, $\h_{\pi}$ is isometric to $\h_{\pi'}$ if and only if $A=B\, C$, with $B\in\O(d)$ and $C\in\mathcal N_\pi$. Thus, the moduli space of flat metrics on $F$ is the double coset space $\mathcal M_{\text{\rm flat}}(F)\cong\O(d)\backslash\mathcal C_F/\mathcal N_\pi$.
\end{proposition}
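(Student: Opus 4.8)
The plan is to construct an explicit surjection from the cone $\mathcal C_F$ onto $\mathcal M_{\text{flat}}(F)$, sending $A\in\mathcal C_F$ to the isometry class of the flat metric induced by $\pi'=(A,v)\,\pi\,(A,v)^{-1}$, and to identify its fibers with the orbits of the two-sided action of $\O(d)$ (on the left) and $\mathcal N_\pi$ (on the right). The two Bieberbach theorems handle surjectivity, while Lemma~\ref{thm:corAOn} supplies the isometry criterion that pins down the fibers.

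For the existence statement, I would start from an arbitrary flat metric $\h$ on $F$. The Riemannian manifold $(F,\h)$ is a closed flat manifold, so by the geometric Bieberbach theorem it is isometric to $\R^d/\pi''$ with its canonical metric $\h_{\pi''}$ for some Bieberbach group $\pi''\subset\Iso(\R^d)$. Since the underlying manifold of $\R^d/\pi''$ is diffeomorphic to $F$, we have $\pi''\cong\pi_1(F)=\pi$, so the algebraic Bieberbach theorem (statement~II) provides $(A,v)\in\aff$ with $\pi''=(A,v)\,\pi\,(A,v)^{-1}$. The key observation is that $A\in\mathcal C_F$ automatically: the rotational parts of $\pi''$ are exactly $A\,\rot(\pi)\,A^{-1}$, and these lie in $\O(d)$ because $\pi''\subset\Iso(\R^d)$, which is precisely the defining condition of $\mathcal C_F$ in \eqref{eq:newXPhi}. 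Transporting $\h$ along the affine diffeomorphism $\R^d/\pi\to\R^d/\pi''$ induced by $(A,v)$ then shows $\h$ is isometric to $\h_{\pi'}$ with $\pi'=\pi''$, giving the existence statement.

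The ``furthermore'' clause is a direct instance of Lemma~\ref{thm:corAOn}: $\h_\pi$ and $\h_{\pi'}$ are the canonical metrics on the manifolds of the isomorphic, affinely conjugate Bieberbach groups $\pi$ and $\pi'$, so that lemma gives $\h_\pi\cong\h_{\pi'}$ if and only if $A=B\,C$ with $B\in\O(d)$ and $C\in\mathcal N_\pi$. To promote this to the full double-coset description, I would compute the fibers in general: for $A,A'\in\mathcal C_F$ with associated groups $\pi'$ and $\pi''$, one has $\pi''=(\tilde A,\tilde v)\,\pi'\,(\tilde A,\tilde v)^{-1}$ with $\tilde A=A'A^{-1}$, so Lemma~\ref{thm:corAOn} yields $\h_{\pi'}\cong\h_{\pi''}$ iff $\tilde A=B\,C'$ with $B\in\O(d)$ and $C'\in\mathcal N_{\pi'}$. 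Because $\rot$ is a homomorphism and conjugation by $(A,v)$ carries $\norm_{\aff}(\pi)$ onto $\norm_{\aff}(\pi')$, one gets $\mathcal N_{\pi'}=A\,\mathcal N_\pi\,A^{-1}$; substituting $C'=A\,C\,A^{-1}$ turns the condition into $A'=B\,A\,C$. In particular, taking $A'=A$ shows the isometry class is independent of $v$ (the translation $(I,v'-v)$ lies in $\Iso(\R^d)$), and in general $A$ and $A'$ give isometric flat metrics exactly when they lie in the same double coset $\O(d)\,A\,\mathcal N_\pi$. Combined with surjectivity, the induced map $\O(d)\backslash\mathcal C_F/\mathcal N_\pi\to\mathcal M_{\text{flat}}(F)$ is the asserted bijection.

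The routine-but-delicate point, and the place where I expect bookkeeping errors to be easiest to make, is this last fiber computation: correctly tracking how the normalizer transforms under the affine conjugation relating $\pi$ to $\pi'$ (the identity $\mathcal N_{\pi'}=A\,\mathcal N_\pi\,A^{-1}$), and verifying that $\mathcal N_\pi\subset\norm_{\GL(d,\R)}(\rot(\pi))$ so that the right $\mathcal N_\pi$-action on $\mathcal C_F$, and hence the double coset, is genuinely well defined. None of these steps is deep, but the change of variables $C'=A\,C\,A^{-1}$ must be handled exactly for the two sides of the double coset to emerge as stated.
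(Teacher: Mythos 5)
Your proof is correct and follows essentially the same route as the paper's: invoke the geometric and algebraic Bieberbach theorems to get $\pi'=(A,v)\,\pi\,(A,v)^{-1}$, then apply Lemma~\ref{thm:corAOn} for the isometry criterion. You actually supply two details the paper leaves implicit --- the verification that $A\in\mathcal C_F$ because $\pi'\subset\Iso(\R^d)$ forces $A\,\rot(\pi)\,A^{-1}\subset\O(d)$, and the general fiber computation via $\mathcal N_{\pi'}=A\,\mathcal N_\pi\,A^{-1}$ needed to identify arbitrary fibers (not just the one over $\h_\pi$) with double cosets --- both of which are handled correctly.
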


\begin{proof}
The flat metric $\h$ must be of the form $\h_{\pi'}$ for some Bieberbach group $\pi'\subset\Iso(\R^d)$. Since $\R^d/\pi$ and $\R^d/\pi'$ are both diffeomorphic to $F$, $\pi$ and $\pi'$ must be isomorphic. By the Bieberbach Theorems II, there exists $A\in\mathcal C_F$ and $v\in\R^d$ such that $\pi'=(A,v)\,\pi\,(A,v)^{-1}$. By Lemma~\ref{thm:corAOn}, the metrics $\h_\pi$ and $\h_{\pi'}$ are isometric if and only if $A=B\, C$, with $B\in\O(d)$ and $C\in\mathcal N_\pi$, concluding the proof.
\end{proof}

\subsection{Collapse of flat manifolds}
Given a Bieberbach group $\pi\subset\Iso(\R^d)$, $A\in\mathcal C_F$ and $v\in\R^d$, if $\pi'=(A,v)\,\pi\,(A,v)^{-1}$, then clearly $\Vol(F,\h_{\pi'})=\det(A)\Vol(F,\h_\pi)$. We exploit this fact together with the above facts about $\mathcal M_{\text{\rm flat}}(F)$ to show that every closed flat manifold (of dimension $\geq2$) admits a $1$-parameter family of collapsing flat metrics; which implies it can be \emph{squeezed} just as the square torus $\R^2/\Z^2\cong\S^1(1)\times\S^1(1)$ can be squeezed through the family of flat tori $\S^1(t)\times\S^1(1/t)$, $t>0$.

\begin{proposition}\label{prop:squeeze}
Any closed flat manifold $(F,\h)$ has a real-analytic family $\h_t$ of flat metrics with $\h_1=\h$, $\Vol(F,\h_t)=\Vol(F,\h)$ and $\diam(F,\h_t)\nearrow+\infty$ as $t\searrow 0$.
\end{proposition}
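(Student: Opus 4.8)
The plan is to realize the collapse by deforming the Bieberbach group $\pi$ inside $\Iso(\R^d)$ through conjugation by a real-analytic path, and to read off the resulting flat metrics via Proposition~\ref{prop:flatmoduli}. Concretely, I would set $\pi_t=(A_t,0)\,\pi\,(A_t,0)^{-1}$ for a path $A_t\in\mathcal C_F$ with $A_1=\id$, so that $\h_t:=\h_{\pi_t}$ is a real-analytic family of flat metrics on $F$ (each $\pi_t\cong\pi$, so $\R^d/\pi_t$ is affinely identified with $F$). Since $\Vol(F,\h_{\pi_t})=\det(A_t)\,\Vol(F,\h)$, imposing $\det A_t\equiv1$ keeps the volume constant, and $A_1=\id$ gives $\h_1=\h$, after using Proposition~\ref{prop:flatmoduli} to assume without loss of generality that $\h=\h_\pi$ (run the argument with the conjugate that represents $\h$). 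By the second description of $\mathcal C_F$ in~\eqref{eq:newXPhi}, I may take $A_t$ symmetric and positive-definite, in which case $A_t\in\mathcal C_F$ exactly when $A_t$ commutes with the holonomy $\Phi:=\rot(\pi)$. Thus everything reduces to building a determinant-one real-analytic path of $\Phi$-invariant positive-definite symmetric matrices, starting at $\id$ and degenerating as $t\searrow0$ so that $\diam(F,\h_t)\to\infty$.

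Next I would transfer the diameter statement to the translation lattice $\Lambda:=\tra(\pi)$. The flat torus $T^d=\R^d/\Lambda$ covers $F$ by a local isometry with $N:=|\Phi|$ sheets, and an $N$-sheeted Riemannian covering satisfies $\diam(T^d)\le(2N-1)\diam(F)$; hence $\diam(F,\h_t)\ge\frac{1}{2N-1}\diam(T^d,A_t\Lambda)$, and the diameter of a flat torus equals its covering radius. With the covolume fixed (as $\det A_t=1$), Mahler's compactness criterion shows it suffices to drive the systole of $A_t\Lambda$ to $0$: a short lattice vector forces, through Minkowski's second theorem, a large last successive minimum and hence a large covering radius. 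So the target becomes: produce such a path $A_t$ along which some nonzero $\ell\in\Lambda$ has $\|A_t\ell\|\to0$.

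This is immediate once $\R^d$ possesses a proper nonzero \emph{rational} (with respect to $\Lambda$) $\Phi$-invariant subspace $U$: then $U^\perp$ is also rational and $\Phi$-invariant, both $\Lambda\cap U$ and $\Lambda\cap U^\perp$ are full-rank, and the path $A_t:=t\,\id_U\oplus t^{-\dim U/\dim U^\perp}\,\id_{U^\perp}$ is $\Phi$-invariant, symmetric positive-definite, real-analytic, has $\det A_t=1$ and $A_1=\id$, and shrinks every $0\neq\ell\in\Lambda\cap U$ to $\|A_t\ell\|=t\|\ell\|\to0$ as $t\searrow0$. Such a $U$ exists in all but one configuration of the rational representation $\Lambda\otimes\mathds{Q}$: if $\Phi$ is trivial ($F$ a torus, $d\ge2$) any rational line works; if $b_1(F)>0$ the averaging projector shows $(\R^d)^\Phi$ is rational, $\Phi$-invariant, nonzero and proper; and if the $\mathds{Q}$-isotypic decomposition of $\Lambda\otimes\mathds{Q}$ has more than one irreducible type, or a single type $V$ of multiplicity $m\ge2$, then a proper isotypic summand (respectively $V\otimes L$ for a rational line $L$ in the multiplicity space) is rational and $\Phi$-invariant. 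The only residual possibility is that $\Lambda\otimes\mathds{Q}$ is $\mathds{Q}$-\emph{irreducible} and nontrivial.

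Excluding that last case is the main obstacle, and I expect it to need genuine input on holonomy representations rather than a soft argument. Indeed, if $\Lambda\otimes\mathds{Q}$ were $\mathds{Q}$-irreducible and nontrivial, then by Schur's lemma $\centr_{\GL(d,\R)}(\Phi)$ would be (the real points of) the unit group of a division algebra over $\mathds{Q}$, whose determinant-one positive-definite part has \emph{compact} arithmetic quotient (Dirichlet's unit theorem for the center, Godement's compactness criterion for the anisotropic part); the systole of $A_t\Lambda$ would stay bounded below and no collapse could occur. Hence rational reducibility of the holonomy is not merely convenient but necessary. I would establish it from the structure of Bieberbach groups: torsion-freeness forces every nonidentity element of $\Phi$ to fix a nonzero vector (otherwise its lift, having $\ker(\rot-\id)$ surjective, admits a fixed point and is of finite order), and the results on holonomy representations of flat manifolds in~\cite{hiss-szczepa}, together with the Bieberbach Theorems, rule out a faithful $\mathds{Q}$-irreducible holonomy action in dimension $d\ge2$. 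Granting this, the subspace $U$ yields the path $A_t$, and therefore the desired real-analytic family $\h_t$ of constant volume with $\h_1=\h$ and $\diam(F,\h_t)\nearrow+\infty$ as $t\searrow0$.
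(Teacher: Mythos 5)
Your proposal is correct and follows essentially the same route as the paper: both produce the collapse by conjugating $\pi$ by a determinant-one real-analytic path $A_t\in\mathcal C_F$ that rescales a holonomy-invariant subspace and its orthogonal complement reciprocally, and both ultimately rest on the Hiss--Szczepa\'nski reducibility theorem for the holonomy representation of a Bieberbach group. The paper's proof is terser: it picks ``a nontrivial invariant subspace $E$'' and simply asserts that the diameter blows up. You correctly identify the point this glosses over: for $\diam(F,\h_t)\to\infty$ one needs some nonzero vector of the translation lattice $\Lambda=\tra(\pi)$ to be shrunk by $A_t$, so the invariant subspace must be \emph{rational} with respect to $\Lambda$ (an irrational invariant line with badly approximable slope in a torus gives a family of bounded diameter). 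This is harmless because the Hiss--Szczepa\'nski theorem concerns precisely the rational holonomy module $\Lambda\otimes\mathds{Q}$, so $E$ may be taken rational; your reduction of ``large diameter'' to ``small systole'' via Minkowski's second theorem and the covering inequality is a correct expansion of the paper's one-line claim. Two minor remarks: the rationality of $U^\perp$ that you assert is neither automatic (it depends on the Gram matrix of $\Lambda$) nor needed, since only $\Lambda\cap U\neq 0$ enters your argument; and your closing discussion of why $\mathds{Q}$-irreducibility would genuinely obstruct collapse, while correct in spirit, becomes superfluous once \cite{hiss-szczepa} is invoked.
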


\begin{proof}
By a result of Hiss and Szczepa\'nski~\cite{hiss-szczepa}, the holonomy representation of any closed flat manifold $F=\R^d/\pi$ is reducible. Let $E\subset\R^d$ be a nontrivial invariant subspace and let $E^\perp$ be its orthogonal complement, which is also invariant as the representation is orthogonal. Denote by $P$ and $P^\perp$ the orthogonal projections of $\mathds R^d$ onto $E$ and $E^\perp$ respectively. It is easy to see that, for all $t>0$, the linear maps
\begin{equation}\label{eq:Alambda}
A_t:=t^{\dim E-d}\cdot P+t^{\dim E}\cdot P^\perp\in\GL(d,\R)
\end{equation}
satisfy $A_t\in\mathcal C_F$ and $\det(A_t)=1$. Thus, the metrics $\h_t:=\h_{\pi_t}$ where $\pi_t=(A_t,0)\,\pi \,(A_t,0)^{-1}$, i.e., the metrics $\h_t\in\mathcal M_{\text{\rm flat}}(F)$ corresponding to the double coset of $A_t$, have fixed volume and arbitrarily large diameter as $t\searrow0$ (or $t\nearrow+\infty$).
\end{proof}

\subsection{Eigenvalues of the Laplacian}
All nonzero eigenvalues $\lambda_j(F,\h_t)$ of the Laplacian of the above collapsing family $(F,\h_t)$ of flat manifolds are nonconstant real-analytic functions of $t$.
Indeed, there is a Riemannian covering $\R^d/\tra(\pi_t)\to\R^d/\pi_t$, where $\tra(\pi_t)=A_t(\tra(\pi))$, and the spectrum of $\Delta_{\h_t}$ is contained in the spectrum of the Laplacian of the flat torus $\R^d/\tra(\pi_t)$.
The dual lattice to $\tra(\pi_t)$ is given by $\tra(\pi_t)^*=\big((A_t)^{\rm t}\big)^{-1}(\tra(\pi)^*)$, and by \cite[p.\ 146]{BerGauMAz} the eigenvalues of the Laplacian of $\R^d/\tra(\pi_t)$ are $4\pi\Vert x\Vert^2$, with $x\in \tra(\pi_t)^*$, which are nonconstant polynomials in $t$ and $\frac1t$, proving the above claim.

\begin{proposition}\label{prop:small-eigenvalues}
For any closed flat manifold $F$ and for all $\varepsilon>0$ and $j\in\N$, there exists a unit volume flat metric $\h$ on $F$ such that $\lambda_j(F,\h)<\varepsilon$.
\end{proposition}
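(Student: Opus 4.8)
The plan is to feed the collapsing family of Proposition~\ref{prop:squeeze} into a min--max estimate carried out on the torus cover. By Proposition~\ref{prop:squeeze}, $F$ carries a real-analytic family $\h_t$ of flat metrics with $\Vol(F,\h_t)$ constant and $\diam(F,\h_t)\to+\infty$ as $t\searrow0$; after a single homothety I may assume each $\h_t$ has unit volume. It then suffices to prove $\lambda_j(F,\h_t)\to0$, since this forces $\lambda_j(F,\h_t)<\varepsilon$ for small $t$. First I would pass to the flat torus $T_t=\R^d/\tra(\pi_t)$, which covers $(F,\h_t)$ isometrically with deck group the finite holonomy group $H=\rot(\pi_t)$. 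Because a Riemannian quotient can only shrink distances, $\diam(T_t)\ge\diam(F,\h_t)\to+\infty$.

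The second step is to create many small eigenvalues on $T_t$. The spectrum of $T_t$ consists of the numbers $4\pi^2\Vert y\Vert^2$ with $y\in\tra(\pi_t)^*$, so $\lambda_1(T_t)=4\pi^2\ell_t^2$, where $\ell_t$ is the length of the shortest nonzero dual vector. Since $\diam(T_t)$ equals the covering radius of $\tra(\pi_t)$, the transference inequalities for lattices (a diverging covering radius forces a short dual vector) give $\ell_t\to0$; equivalently, the chosen squeezing direction $E$ contracts the dual lattice along $E$, so any nonzero dual vector lying in or approaching $E$ has length tending to $0$. Writing $x_t$ for a shortest dual vector, $\Vert x_t\Vert=\ell_t\to0$, its integer multiples $x_t,2x_t,\dots$ are again dual vectors, of norm $n\ell_t$, so $T_t$ carries eigenvalues $4\pi^2n^2\ell_t^2$; hence $\lambda_j(T_t)\le4\pi^2j^2\ell_t^2\to0$ for every fixed $j$.

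The delicate step, and the main obstacle, is to \emph{descend} these eigenvalues to $F$: since the spectrum of $F$ is only a subset of that of $T_t$, the small torus eigenvalues could a priori all be annihilated by holonomy averaging. I would resolve this variationally. Let $Q$ denote the projection onto $\pi_t$-invariant functions on $T_t$; applied to a character $e_y(x)=e^{2\pi i\langle y,x\rangle}$ it yields $Qe_y=\frac1{|H|}\sum_{g\in H}\omega_g(y)\,e_{gy}$, a sum over the holonomy orbit with unit-modulus phases $\omega_g(y)$ determined by the translational parts of $\pi_t$. For $y=nx_t$ every frequency $g\,(nx_t)$ has norm $n\ell_t$, so $Qe_{nx_t}$ is an eigenfunction of $\Delta_{T_t}$ of eigenvalue $4\pi^2n^2\ell_t^2$ that descends to $F$ and is orthogonal to the constants; functions obtained from distinct $n$ use disjoint frequency sets and are therefore mutually orthogonal. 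It remains to guarantee that enough of them are nonzero: as the $e_{gx_t}$ are linearly independent, $Qe_{nx_t}=0$ would force the stabilizer phase-sum $\sum_{h\in H_{x_t}}\omega_h(nx_t)$ to vanish, and as a function of $n$ this is a nontrivial trigonometric sum (its $h=e$ term contributes $1$), hence vanishes only on a sparse set of $n$. Choosing $j$ admissible indices $n\le N=O(j)$ and inserting the corresponding $Qe_{nx_t}$ into the min--max characterization yields $\lambda_j(F,\h_t)\le4\pi^2N^2\ell_t^2\to0$, as required. The crux of the whole argument is thus this nonvanishing-after-symmetrization step, which is precisely the obstruction to low eigenvalues surviving the passage from the torus to the flat manifold $F$.
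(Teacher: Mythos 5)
Your argument is correct, but it takes a genuinely different route from the paper's, which is essentially a one-liner: a closed flat manifold has $\Ric\ge 0$, so Cheng's eigenvalue comparison theorem gives $\lambda_j(F,\h)\le 2j^2\,d(d+4)/\diam(F,\h)^2$ directly on $F$, and the collapsing family of Proposition~\ref{prop:squeeze} makes the right-hand side arbitrarily small. In particular, the descent problem you single out as the crux never arises in the paper, because Cheng's estimate does not see the torus cover at all. Your replacement --- transference from the diverging covering radius to a short dual vector $x_t$, then symmetrization of the characters $e_{nx_t}$ over the holonomy group --- does close correctly: since $Q$ is an orthogonal projection, $\Vert Qe_y\Vert^2=\langle Qe_y,e_y\rangle$ is the stabilizer phase-sum up to normalization; the orbits of $nx_t$ for distinct $n$ are disjoint because the holonomy acts orthogonally, so the frequencies have different norms; and the phase-sum is nonzero for a set of $n$ of density at least $1/|H_{x_t}|$. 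Two small points: the vanishing set of the phase-sum need not be \emph{sparse}, only of density bounded away from $1$, which is all you use; and since each phase $\theta_h=\langle x_t,v_h\rangle$ is rational with denominator dividing the order of $h$ (because a suitable power of $(B,v_h)$ is a translation in $\tra(\pi_t)$, against which $x_t$ pairs integrally), the sum equals $|H_{x_t}|$ whenever $n$ is a multiple of the exponent of the stabilizer --- a cleaner way to pick your $j$ admissible indices. What your route buys is explicitness: actual eigenfunctions on $F$ and a bound governed purely by the lattice geometry, with no comparison-geometry input. What Cheng's estimate buys is brevity and complete indifference to the cover, since it applies to any collapsing family with nonnegative Ricci curvature.
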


\begin{proof}
By a classical estimate of Cheng~\cite[Cor.\ 2.2]{cheng75}, since $(F,\h)$ has $\Ric\geq0$ and $\dim F=d$, then
\begin{equation}\label{eq:cheng}
\lambda_j(F,\h)\le2j^2\,\frac{d(d+4)}{\diam(F,\h)^2}.
\end{equation}
By Proposition~\ref{prop:squeeze}, there are unit volume flat metrics $\h$ on $F$ for which the right-hand side of the above is arbitrarily small, which concludes the proof.
\end{proof}

\subsection{Bifurcation of solutions}
Let $M$ be a closed manifold and $\g_t$ be a $1$-parame\-ter family of unit volume constant scalar curvature metrics on $M$. We say that $t_*$ is a \emph{bifurcation instant} for $\g_t$ if there exist sequences 
$\{t_q\}_{q\in\N}$ converging to $t_*$ and $\{\phi_q\}_{q\in\N}$ of smooth nonconstant positive functions on $M$ such that:
\begin{enumerate}[(i)]
\item $\phi_q\to 1$ in the $C^{2,\alpha}$-topology;
\item $\g_q:=\phi_q\cdot \g_{t_q}$ is a unit volume constant scalar curvature metric on $M$.
\end{enumerate}

Applying standard variational bifurcation results to the Hilbert-Einstein functional on conformal classes of metrics, one obtains the following criterion for bifurcation of solutions to the Yamabe problem, see for instance \cite{LPZ12}.

\begin{theorem}\label{thm:abstrcrit}
Let $M$ be a closed manifold of dimension $n\ge3$ with a $1$-parameter family $\g_t$ of unit volume metrics with constant scalar curvature $\scal_{\g_t}$.
Let $\mathfrak i(M,\g_t)$ be the number of eigenvalues of $\Delta_{\g_t}$, counted with multiplicity, that are $<\frac{\scal_{\g_t}}{n-1}$.
Assume:
\begin{itemize}
\item[(a)] $\frac{\scal_{\g_t}}{n-1}\not\in\spec(\Delta_{\g_t})\setminus\{0\}$ for all $t\ne t_*$;
\item[(b)] $\mathfrak i(M,\g_{t_*-\varepsilon})\ne\mathfrak i(M,\g_{t_*+\varepsilon})$ for some small $\varepsilon>0$.
\end{itemize}
Then $t_*$ is a bifurcation instant for the family $\g_t$.
\end{theorem}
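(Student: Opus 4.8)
The plan is to derive Theorem~\ref{thm:abstrcrit} as an application of a standard abstract variational bifurcation criterion to the Hilbert-Einstein functional $\mathcal A$ restricted to the $H^1$ conformal class $[\g_t]$, normalized to unit volume. The essential point is that solutions of the Yamabe problem are critical points of $\mathcal A$ (as recalled after \eqref{eq:afunct}), so bifurcation of unit-volume constant scalar curvature metrics along $\g_t$ is exactly bifurcation of critical points of the family of functionals $\mathcal A|_{[\g_t]}$. The quantity $\mathfrak i(M,\g_t)$ will turn out to be the Morse index of $\g_t$ as a critical point of $\mathcal A$, and the two hypotheses (a) and (b) will be precisely the nondegeneracy and index-jump conditions required by the abstract theorem.

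\textbf{Step 1: Set up the functional framework.} First I would write each metric in $[\g_t]$ as $\phi\,\g_t$ with $\phi>0$ in $H^1(M)$, and use the conformal law for scalar curvature to express $\scal_{\phi\,\g_t}$ in terms of $\phi$ and $\Delta_{\g_t}$. Restricting $\mathcal A$ to the unit-volume slice $\{\Vol(M,\phi\,\g_t)=1\}$ gives, for each $t$, a smooth functional whose critical points are the unit-volume solutions $\phi\,\g_t$. The constant conformal factor $\phi\equiv1$ is always a critical point (giving $\g_t$ itself), so we have a trivial branch of critical points parametrized by $t$.

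\textbf{Step 2: Compute the second variation.} Next I would compute the Hessian of $\mathcal A|_{[\g_t]}$ at the trivial solution $\g_t$, working on the tangent space to the unit-volume slice (functions $\psi$ with $\int_M \psi\,\vol_{\g_t}=0$). A standard computation shows this Hessian is, up to a positive multiplicative constant, the quadratic form
\begin{equation*}
\psi\longmapsto\int_M\!\Big(|\nabla\psi|^2-\tfrac{\scal_{\g_t}}{n-1}\,\psi^2\Big)\vol_{\g_t},
\end{equation*}
whose spectrum is governed by the eigenvalues of $\Delta_{\g_t}$ compared against the threshold $\tfrac{\scal_{\g_t}}{n-1}$. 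Consequently the Morse index of $\g_t$ equals the number of eigenvalues of $\Delta_{\g_t}$ strictly below $\tfrac{\scal_{\g_t}}{n-1}$, which is exactly $\mathfrak i(M,\g_t)$, and the Hessian is degenerate precisely when $\tfrac{\scal_{\g_t}}{n-1}\in\spec(\Delta_{\g_t})\setminus\{0\}$. Here hypothesis~(a) guarantees nondegeneracy of the trivial branch for all $t\neq t_*$.

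\textbf{Step 3: Invoke the abstract criterion.} With the Morse index computed, the conclusion follows from the classical principle that a change in Morse index across a nondegenerate family forces a bifurcation. More precisely, I would cite the abstract variational bifurcation theorem (as in \cite{LPZ12}) applied to this family of functionals: hypothesis~(a) provides nondegeneracy on both sides of $t_*$, while hypothesis~(b), the jump $\mathfrak i(M,\g_{t_*-\varepsilon})\neq\mathfrak i(M,\g_{t_*+\varepsilon})$, provides the change in Morse index that obstructs the trivial branch from being the only branch of critical points near $t_*$. This yields sequences $t_q\to t_*$ and nonconstant $\phi_q\to1$ in $C^{2,\alpha}$ (elliptic regularity upgrading the $H^1$-convergence given by the abstract theorem) with $\phi_q\,\g_{t_q}$ a unit-volume constant scalar curvature metric, which is exactly the definition of $t_*$ being a bifurcation instant.

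\textbf{The main obstacle} is a technical one rather than conceptual: verifying that the abstract bifurcation theorem genuinely applies requires checking its functional-analytic hypotheses—that the relevant operators are Fredholm of index zero, that the family depends smoothly (indeed real-analytically) on $t$, and that the equivariant or local compactness conditions hold so that the index-jump criterion is valid. These are by now standard for the Yamabe-type setting and are handled in the references \cite{bp-calcvar,bp-pacific,LPZ12}, so the cleanest route is to phrase the proof as a direct citation of the abstract result together with the index computation in Step~2, rather than reproving the bifurcation machinery.
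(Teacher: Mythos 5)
Your proposal is correct and follows exactly the route the paper takes: the paper does not prove Theorem~\ref{thm:abstrcrit} itself but derives it by "applying standard variational bifurcation results to the Hilbert-Einstein functional on conformal classes of metrics," citing \cite{LPZ12}, which is precisely your Steps 1--3 (unit-volume slice, second variation giving the quadratic form $\int_M\big(|\nabla\psi|^2-\tfrac{\scal_{\g_t}}{n-1}\psi^2\big)\vol_{\g_t}$ and hence Morse index $\mathfrak i(M,\g_t)$, then the nondegeneracy-plus-index-jump criterion). Your write-up simply makes explicit the details the paper delegates to the references.
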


A discussion on the convergence of bifurcating branches of constant scalar curvature metrics $\phi_q\to 1$ can be found in \cite[Sec.~3]{hector}. As a consequence of Theorem~\ref{thm:abstrcrit}, we have the following criterion for bifurcation of products with flat manifolds:

\begin{corollary}\label{thm:corbifprodflat}
Let $(M,\g)$ be a closed unit volume Riemannian manifold with positive constant scalar curvature $\scal_\g$. Let $F$ be a closed flat manifold and $\h_t$, $t\in[t_*-\varepsilon,t_*+\varepsilon]$, be a $1$-parameter family of flat unit volume metrics on $F$. Set
\begin{equation}\label{eq:(k1k2)}
\mathfrak i_t:=\#\Big\{(j_1,j_2):j_1,j_2\ge0,\ \lambda_{j_1}(M,\g)+\lambda_{j_2}(F,\h_t)<\frac{\scal_\g}{\dim M+\dim F-1}\Big\},
\end{equation}
and assume:
\begin{itemize}
\item[(a)] for all $t\ne t_*$ and all $0\leq j\leq \mathfrak i(M,\g)-1$,
\begin{equation}\label{eq:cond(a)}
    \frac{\scal_\g}{\dim M+\dim F-1}-\lambda_j(M,\g)\not\in\spec(\Delta_{\h_t});
    \end{equation}
\item[(b)] $\mathfrak i_{t_*-\varepsilon}\ne\mathfrak i_{t_*+\varepsilon}$ for some small $\varepsilon>0$.
\end{itemize}
Then $t_*$ is a bifurcation instant for the family of metrics $\g\oplus \h_t$ on $M\times F$.
\end{corollary}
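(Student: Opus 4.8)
The plan is to deduce the statement directly from Theorem~\ref{thm:abstrcrit}, applied to the family $\g_t:=\g\oplus\h_t$ of metrics on the product $M\times F$, with $n=\dim M+\dim F$. The first step is to check the standing hypotheses of that theorem. Since $\Vol(M\times F,\g\oplus\h_t)=\Vol(M,\g)\,\Vol(F,\h_t)=1$, each $\g_t$ has unit volume; and since $F$ is flat we have $\scal_{\h_t}=0$, so that $\scal_{\g\oplus\h_t}=\scal_\g+\scal_{\h_t}=\scal_\g$ is a positive constant independent of $t$. Thus $\g_t$ is a $1$-parameter family of unit volume metrics with constant positive scalar curvature, exactly as required, and the relevant threshold in Theorem~\ref{thm:abstrcrit} is $\Lambda:=\frac{\scal_\g}{\dim M+\dim F-1}$, the same quantity appearing in the corollary.

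The key computation is the spectrum of the product Laplacian. Because $\Delta_{\g\oplus\h_t}=\Delta_\g\otimes\id+\id\otimes\Delta_{\h_t}$ acting on the Hilbert space tensor product $L^2(M\times F)\cong L^2(M)\otimes L^2(F)$, its eigenfunctions are products of eigenfunctions on the two factors, and its spectrum, counted with multiplicity, is precisely $\{\lambda_{j_1}(M,\g)+\lambda_{j_2}(F,\h_t):j_1,j_2\ge0\}$. Consequently the index $\mathfrak i(M\times F,\g\oplus\h_t)$ of Theorem~\ref{thm:abstrcrit}, namely the number of product eigenvalues strictly below $\Lambda$, is exactly the number $\mathfrak i_t$ defined in \eqref{eq:(k1k2)}. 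With this identification, hypothesis (b) of Theorem~\ref{thm:abstrcrit} is literally hypothesis (b) of the corollary.

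It remains to verify hypothesis (a) of Theorem~\ref{thm:abstrcrit}, that $\Lambda\notin\spec(\Delta_{\g\oplus\h_t})\setminus\{0\}$ for all $t\ne t_*$, and this is the step where I expect the only genuine subtlety. A nonzero product eigenvalue equals $\Lambda$ precisely when $\lambda_{j_2}(F,\h_t)=\Lambda-\lambda_{j_1}(M,\g)$ for some pair $(j_1,j_2)$. Here I would exploit that the metric $\g$ on $M$ is \emph{fixed}, so $\lambda_{j_1}(M,\g)$ does not depend on $t$: if $\lambda_{j_1}(M,\g)>\Lambda$ then the sum is always $>\Lambda$ and no coincidence can occur, whereas if $\lambda_{j_1}(M,\g)<\Lambda$, i.e. $0\le j_1\le\mathfrak i(M,\g)-1$, then $\Lambda-\lambda_{j_1}(M,\g)$ is a positive number which by hypothesis (a) of the corollary (condition \eqref{eq:cond(a)}) does not lie in $\spec(\Delta_{\h_t})$ for $t\ne t_*$. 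This explains why it suffices to impose condition (a) over the finite range $0\le j\le\mathfrak i(M,\g)-1$. The one point requiring care — and the main obstacle to flag — is the boundary case $\lambda_{j_1}(M,\g)=\Lambda$, which through $j_2=0$ would contribute the $t$-independent product eigenvalue $\Lambda$ sitting exactly on the threshold; I would dispose of this either by noting it is excluded in the intended reading (so that $\Lambda\notin\spec(\Delta_\g)$) or by observing that such a stationary eigenvalue never crosses the threshold and hence does not affect the local constancy of the index on either side of $t_*$. Once both hypotheses are in place, Theorem~\ref{thm:abstrcrit} immediately yields that $t_*$ is a bifurcation instant for $\g\oplus\h_t$ on $M\times F$, completing the proof.
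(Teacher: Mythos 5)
Your proposal is correct and follows exactly the route the paper intends: the paper states Corollary~\ref{thm:corbifprodflat} as an immediate consequence of Theorem~\ref{thm:abstrcrit}, with the spectrum of $\Delta_{\g\oplus\h_t}$ being the sum set $\{\lambda_{j_1}(M,\g)+\lambda_{j_2}(F,\h_t)\}$ so that $\mathfrak i(M\times F,\g\oplus\h_t)=\mathfrak i_t$ and hypothesis \eqref{eq:cond(a)} over the finite range $0\le j\le\mathfrak i(M,\g)-1$ rules out nonzero product eigenvalues hitting the threshold for $t\ne t_*$. Your flagging of the borderline case $\lambda_{j_1}(M,\g)=\frac{\scal_\g}{\dim M+\dim F-1}$ is a legitimate observation about a detail the paper glosses over, and your resolution (a $t$-independent eigenvalue sitting on the threshold cannot contribute to the index jump) is the right way to dispose of it.
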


\begin{remark}
Note that the bifurcating branch in Corollary~\ref{thm:corbifprodflat} issuing from the family of product metrics $\g\oplus\h_t$ does not contain any other product metrics. Indeed, two product metrics are conformal if and only if they are homothetic.
\end{remark}

We are now establish the bifurcation result on closed product manifolds $M\times F$ which lies in the core of the multiplicity result for $M\times\R^d$ in Theorem~\ref{thm:flat}.

\begin{theorem}\label{thm:mainbif}
Let $(F,\h)$ be a closed flat manifold with unit volume. There exists a real-analytic family $\h_t$, $t\in\left(0,+\infty\right)$, of unit volume flat metrics on $F$, with $\h_1=\h$, such that, if $(M,\g)$ is a unit volume closed Riemannian manifold with constant positive scalar curvature, there is a discrete countable set of bifurcation instants for the family of metrics $\g\oplus \h_t$,  on $M\times F$.
\end{theorem}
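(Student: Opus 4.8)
The plan is to choose the collapsing family $\h_t$ furnished by Proposition~\ref{prop:squeeze} and feed it into the bifurcation criterion of Corollary~\ref{thm:corbifprodflat}, extracting one bifurcation instant from each jump of the index function $\mathfrak i_t$ in \eqref{eq:(k1k2)}. Concretely, I would fix the real-analytic family $\h_t$, $t\in(0,+\infty)$, of unit volume flat metrics on $F$ with $\h_1=\h$ and $\diam(F,\h_t)\nearrow+\infty$ as $t\searrow0$, and set $c:=\frac{\scal_\g}{\dim M+\dim F-1}>0$, a fixed positive constant since $\scal_\g>0$. Since $(M,\g)$ enters only through the fixed sequence $\lambda_{j_1}(M,\g)$, the entire analysis takes place along the one-parameter family $\h_t$. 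The first substantive step is to show $\mathfrak i_t\to+\infty$ as $t\searrow0$. Because $\lambda_0(M,\g)=0$, each eigenvalue $\lambda_{j_2}(F,\h_t)<c$ contributes the pair $(0,j_2)$ in \eqref{eq:(k1k2)}, so $\mathfrak i_t$ dominates the number of eigenvalues of $\Delta_{\h_t}$ below $c$. By Cheng's estimate \eqref{eq:cheng}, for each fixed $j$ one has $\lambda_j(F,\h_t)\le 2j^2\frac{d(d+4)}{\diam(F,\h_t)^2}\to0$ as $t\searrow0$; hence for every $J\in\N$ there is $t_J>0$ with $\lambda_J(F,\h_t)<c$ for all $t<t_J$, forcing at least $J+1$ eigenvalues below $c$ and thus $\mathfrak i_t\ge J+1$. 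This is precisely Proposition~\ref{prop:small-eigenvalues} applied along the family.

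Next I would control where $\mathfrak i_t$ can change. For each fixed $t$ the count $\mathfrak i_t$ is finite, because $\lambda_{j_1}(M,\g)+\lambda_{j_2}(F,\h_t)\to+\infty$ as $j_1+j_2\to\infty$, and $t\mapsto\mathfrak i_t$ is locally constant off the degeneracy set
\[
\mathcal D=\big\{t:\ \lambda_{j_1}(M,\g)+\lambda_{j_2}(F,\h_t)=c\ \text{ for some }\ j_1,j_2\ge0\big\}.
\]
I would prove $\mathcal D$ is discrete as follows. Only the finitely many indices $j_1$ with $\lambda_{j_1}(M,\g)<c$ are relevant, and on any compact interval $[a,b]\subset(0,+\infty)$ the metrics $\h_t$ form a real-analytic compact family, so by Weyl's law the eigenvalues $\lambda_{j_2}(F,\h_t)$ are bounded below uniformly in $t\in[a,b]$ by a quantity tending to $+\infty$ with $j_2$; thus only finitely many pairs $(j_1,j_2)$ can satisfy the crossing equation on $[a,b]$. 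Since each relevant $\lambda_{j_2}(F,\h_t)$ is a nonconstant real-analytic function of $t$ (indeed a nonconstant polynomial in $t$ and $\tfrac1t$, as recorded before Proposition~\ref{prop:small-eigenvalues}), each equation $\lambda_{j_2}(F,\h_t)=c-\lambda_{j_1}(M,\g)$ has isolated solutions, so $\mathcal D\cap[a,b]$ is finite and $\mathcal D$ is discrete and countable.

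To finish, since $\mathfrak i_t$ is finite near $t=1$ but $\mathfrak i_t\to+\infty$ as $t\searrow0$, the locally constant function $\mathfrak i_t$ must jump at infinitely many points $t_*\in\mathcal D$. At each such $t_*$ I would choose $\varepsilon>0$ with $(t_*-\varepsilon,t_*+\varepsilon)\cap\mathcal D=\{t_*\}$ and apply Corollary~\ref{thm:corbifprodflat} on this interval: condition~(b) holds by construction, since $\mathfrak i_{t_*-\varepsilon}\ne\mathfrak i_{t_*+\varepsilon}$, while condition~(a) holds because for $t\in(t_*-\varepsilon,t_*+\varepsilon)\setminus\{t_*\}$ we have $t\notin\mathcal D$, which is exactly the statement that no value $c-\lambda_j(M,\g)$ lies in $\spec(\Delta_{\h_t})$. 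This produces a bifurcation instant at $t_*$, and as all of them lie in $\mathcal D$, the resulting set of bifurcation instants is discrete, countable, and infinite, as claimed.

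I expect the main obstacle to be precisely the discreteness of $\mathcal D$: the unboundedness of $\mathfrak i_t$ means infinitely many eigenvalue crossings occur as $t\searrow0$, and one must rule out their accumulation at any interior point in order to isolate individual jumps and verify the non-resonance condition~(a). This is resolved by combining the uniform Weyl lower bound on compact $t$-intervals (which makes only finitely many pairs relevant at a time) with the real-analyticity of the eigenvalue branches (which makes each crossing isolated). A secondary, harmless subtlety is that not every point of $\mathcal D$ need be a genuine jump of $\mathfrak i_t$, since an eigenvalue could meet $c$ tangentially; but the divergence $\mathfrak i_t\to+\infty$ alone forces infinitely many honest jumps, which is all that is needed.
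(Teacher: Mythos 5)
Your proposal is correct and follows essentially the same route as the paper: take the collapsing family from Proposition~\ref{prop:squeeze}, use Cheng's estimate \eqref{eq:cheng} to force $\mathfrak i_t\to+\infty$ as $t\searrow0$, use the fact that the eigenvalues $\lambda_{j}(F,\h_t)$ are nonconstant polynomials in $t$ and $\tfrac1t$ (plus finiteness of relevant pairs on compact $t$-intervals) to make the degeneracy set locally finite, and then apply Corollary~\ref{thm:corbifprodflat} at the infinitely many resulting jumps of $\mathfrak i_t$. Your two refinements --- bounding $\mathfrak i_t$ below via the pairs $(0,j_2)$ rather than $(N_0,j_2)$, and the explicit remark that tangential crossings are harmless because the divergence of $\mathfrak i_t$ alone forces infinitely many honest jumps --- are minor variations consistent with the paper's argument.
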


\begin{proof}
The desired sequence of bifurcation instants is obtained by repeatedly applying Corollary~\ref{thm:corbifprodflat}. Let $\h_t$ be the family of metrics on $F$ given by Proposition~\ref{prop:squeeze}.

First, we claim that for all $\varrho>0$, the only possible accumulation points of
\begin{equation*}
\mathcal D_\varrho=\big\{t\in\left(0,+\infty\right):\varrho\in\spec\big(\Delta_{\g\oplus \h_t}\big)\big\}
\end{equation*}
are $0$ and $+\infty$. Indeed, the spectrum $\spec\big(\Delta_{\g\oplus \h_t}\big)$ is the set of eigenvalues
\begin{equation*}
\lambda_{j_1}(M,\g)+\lambda_{j_2}(F,\h_t),\qquad j_1,j_2\ge0,
\end{equation*}
and the functions $t\mapsto \lambda_j(F,\h_t)$ are polynomials in $t$ and $\frac1t$. Thus, for all fixed $\varrho$, $j_1$, and $j_2$, the set of $t$'s for which $\lambda_{j_1}(M,\g)+\lambda_{j_2}(F,\h_t)=\varrho$ is finite. Moreover, for each compact interval $[a,b]\subset (0,+\infty)$, there are only finitely many pairs $(j_1,j_2)$ such that $\lambda_{j_1}(M,\g)+\lambda_{j_2}(F,\h_t)=\varrho$ for some $t\in [a,b]$.
Therefore, $\mathcal D_\varrho\cap [a,b]$ is finite, proving the claim. 
Setting $\varrho=\frac{\scal_\g}{\dim M+\dim F-1}$, it follows that \eqref{eq:cond(a)} in assumption (a) of Corollary~\ref{thm:corbifprodflat} holds for all $t\in (0,+\infty)$ outside a locally finite set.

Second, we claim that $\mathfrak i_t\nearrow+\infty$ as $t\searrow 0$, where $\mathfrak i_t$ is defined in \eqref{eq:(k1k2)}.
Setting
\begin{equation*}
N_0:=\max\Big\{j\ge0:\lambda_j(M,\g)<\frac{\scal_\g}{\dim M+\dim F-1}\Big\},
\end{equation*}
it is easy to see that
\begin{equation}\label{eq:estimateindex}
\mathfrak i_t\geq \#\Big\{j\ge0:\lambda_j(F,\h_t)<\frac{\scal_\g}{\dim M+\dim F-1}-\lambda_{N_0}(M,\g)\Big\}.
\end{equation}
Since $\diam(F,\h_t)$ becomes arbitrarily large as $t\searrow 0$, we also have that
\begin{equation*}
\lim\limits_{t\searrow0}\diam\big(M\times F,\g\oplus \h_t\big)=\lim\limits_{t\searrow0}\sqrt{\diam(M,\g)^2+\diam(F, \h_t)^2}=+\infty.
\end{equation*}
Thus, by Cheng's eigenvalue estimate \eqref{eq:cheng}, see Proposition~\ref{prop:small-eigenvalues}, we find that the right-hand side of \eqref{eq:estimateindex} becomes unbounded as $t\searrow 0$.

Therefore, Corollary~\ref{thm:corbifprodflat} can be applied to an infinite sequence of sufficiently small $t_*\in\left(0,+\infty\right)$, yielding the desired sequence of bifurcation instants.
\end{proof}

Finally, Theorem~\ref{thm:flat} in the Introduction follows from Theorem~\ref{thm:mainbif} applied to the closed manifold $M\times F$, where $F=\R^d/\pi$, which can be assumed to have unit volume by to rescaling. Note that the pull-back to $M\times\R^d$ of metrics on $M\times F$ with constant scalar curvature that are conformal to $\g\oplus\h_t$ are $\pi$-periodic solutions to the Yamabe problem on $\big(M\times\R^d,\,\g\oplus\gf\big)$.

\end{document}